\providecommand{\U}[1]{\protect\rule{.1in}{.1in}}
\newtheorem{theorem}{Theorem}
\newtheorem{acknowledgement}[theorem]{Acknowledgement}
\newtheorem{corollary}[theorem]{Corollary}
\newtheorem{proposition}[theorem]{Proposition}
\newtheorem{remark}[theorem]{Remark}
\newenvironment{proof}[1][Proof]{\noindent\textbf{#1.} }{\ \rule{0.5em}{0.5em}}
\begin{document}

\title{Symplectic and Hamiltonian Deformations of Gabor Frames}
\author{Maurice A. de Gosson\\University of Vienna, Faculty of Mathematics, NuHAG}
\maketitle

\begin{abstract}
We study symplectic deformations of Gabor frames, using the covariance
properties of the Heisenberg operators. This allows us to recover in a very
simple way known results. We thereafter propose a general deformation scheme
by Hamiltonian isotopies, which are paths of Hamiltonian flows. We define and
study in detail a weak notion of Hamiltonian deformations, using ideas from
semiclassical analysis due to Heller and Hagedorn. This method can be easily
implemented using symplectic integrators.

\end{abstract}

\section{Introduction}

The theory of Gabor frames (or Weyl--Heisenberg frames as they are also
called) is a rich and expanding topic of harmonic analysis. It has many
applications in time-frequency analysis, signal theory, and mathematical
physics. The aim of this article is to initiate a systematic study of the
symplectic transformation properties of Gabor frames, both in the linear and
nonlinear cases. Strangely enough, the use of symplectic techniques in the
theory of Gabor frames is very often ignored; one example (among many others)
being Casazza's seminal paper \cite{casa} on modern tools for Weyl--Heisenberg
frame theory, where the word \textquotedblleft symplectic\textquotedblright%
\ does not appear a single time in the 127 pages of this paper! There are
however exceptions: in Gr\"{o}chenig's treatise \cite{Gro} the metaplectic
representation is used to study various symmetries; the same applies to the
recent paper by Pfander \textit{et al.} \cite{goetz}, elaborating on earlier
work \cite{han} by Han and Wang, where symplectic transformations are
exploited to study various properties of Gabor frames.

In this paper we consider the notion of deformation of a Gabor system using
the \textit{Hamiltonian isotopies }we introduce in section \ref{sechamn}. A
Hamiltonian isotopy is a curve $(f_{t})_{0\leq t\leq1}$ of diffeomorphisms of
phase space $\mathbb{R}^{2n}$ starting at the identity, and such that there
exists a Hamiltonian function $H$, usually time-dependent such that the
(generalized) phase flow $(f_{t}^{H})_{t}$ determined by the Hamilton
equations%
\begin{equation}
\dot{x}=\partial_{p}H(x,p,t)\text{ \ , \ }\dot{p}=-\partial_{p}H(x,p,t)
\label{ham}%
\end{equation}
consists of precisely the mappings $f_{t}$ for $0\leq t\leq1$. It follows that
a Hamiltonian isotopy consists of symplectomorphisms (or canonical
transformations, as they are called in Physics). Given a Gabor system
${\mathcal{G}}(\phi,\Lambda)$ with window (or atom) $\phi$ and lattice
$\Lambda$ we want to find a working definition of the deformation of
${\mathcal{G}}(\phi,\Lambda)$ by a Hamiltonian isotopy $(f_{t})_{0\leq t\leq
1}$. While it is clear that the deformed lattice should be the image
$\Lambda_{t}=f_{t}(\Lambda)$ of the original lattice $\Lambda$, it is less
clear what the deformation $\phi_{t}=f_{t}(\phi)$ of the window $\phi$ should
be. A clue is given by the linear case: assume that the mappings $f_{t}$ are
linear, \textit{i.e.} symplectic matrices $S_{t}$; assume in addition that
there exists an infinitesimal symplectic transformation $X$ such that
$S_{t}=e^{tX}$ for $0\leq t\leq1$. Then $(S_{t})_{t}$ is the flow determined
by the Hamiltonian function%
\begin{equation}
H(x,p)=-\tfrac{1}{2}(x,p)^{T}JX(x,p) \label{hxp}%
\end{equation}
where $J$ is the standard symplectic matrix. There exists a one-parameter
group of unitary operators $(\widehat{S}_{t})_{t}$ satisfying the operator
Schr\"{o}dinger equation%
\[
i\hbar\frac{\mathrm{d}}{\mathrm{d}t}\widehat{S}_{t}=H(x,-i\hbar\partial
_{x})\widehat{S}_{t}%
\]
where the formally self-adjoint operator $H(x,-i\hbar\partial_{x})$ is
obtained by replacing formally $p$ with $-i\hbar\partial_{x}$ in (\ref{hxp});
the matrices $S_{t}$ and the operators $\widehat{S}_{t}$ correspond to each
other via the metaplectic representation of the symplectic group. This
suggests that we define the deformation of the initial window $\phi$ by
$\phi_{t}=\widehat{S}_{t}\phi$. It turns out that this definition is
satisfactory, because it allows to recover, setting $t=1$, known results on
the image of Gabor frames by linear symplectic transformations. This example
is thus a good guideline; however one encounters difficulties as soon as one
want to extend it to more general situations. While it is \textquotedblleft
reasonably\textquotedblright\ easy to see what one should do when the
Hamiltonian isotopy consists of an arbitrary path of symplectic matrices (this
will be done in section \ref{secsymp1}), it is not clear at all what a
\textquotedblleft good\textquotedblright\ definition should be in the general
nonlinear case: this is discussed in section \ref{sechade}, where we suggest
that a natural choice would be to extend the linear case by requiring that
$\phi_{t}$ should be the solution of the Schr\"{o}dinger equation
\[
i\hbar\frac{\mathrm{d}}{\mathrm{d}t}\phi_{t}=\widehat{H}\phi_{t}%
\]
associated with the Hamiltonian function $H$ determined by the equality
$(f_{t})_{0\leq t\leq1}=(f_{t}^{H})_{0\leq t\leq1}$; the Hamiltonian operator
$\widehat{H}$ would then be associated with the function $H$ by using, for
instance, the Weyl correspondence. Since the method seems to be difficult to
study theoretically and to implement numerically, we propose what we call a
notion of \emph{weak deformation}, where the exact definition of the
transformation $\phi\longmapsto\phi_{t}$ of the window $\phi$ is replaced with
a correspondence used in semiclassical mechanics, and which consists in
propagating the \textquotedblleft center\textquotedblright\ of a sufficiently
sharply peaked initial window $\phi$ (for instance a coherent state, or a more
general Gaussian) along the Hamiltonian trajectory. This definition coincides
with the definition already given in the linear case, and has the advantage of
being easily computable using the method of symplectic integrators (which we
review in section \ref{secsympalg}) since all what is needed is the knowledge
of the phase flow determined by a certain Hamiltonian function. Finally we
discuss possible extensions of our method.

\subsubsection*{Notation and terminology}

The generic point of the phase space $\mathbb{R}^{2n}\equiv\mathbb{R}%
^{n}\times\mathbb{R}^{n}$ is denoted by $z=(x,p)$ where we have set
$x=(x_{1},...,x_{n})$, $p=(p_{1},...,p_{n})$. The scalar product of two
vectors, say $p$ and $x$, is denoted by $p\cdot x$ or simply $px$. When matrix
calculations are performed, $z,x,p$ are viewed as column vectors. We will
equip $\mathbb{R}^{2n}$ with the standard symplectic structure
\[
\sigma(z,z^{\prime})=p\cdot x^{\prime}-p^{\prime}\cdot x;
\]
in matrix notation $\sigma(z,z^{\prime})=(z^{\prime})^{T}Jz$ where $J=%
\begin{pmatrix}
0 & I\\
-I & 0
\end{pmatrix}
$ ($0$ and $I$ are the $n\times n$ zero and identity matrices). The symplectic
group of $\mathbb{R}^{2n}$ is denoted by $\operatorname*{Sp}(n)$; it consists
of all linear automorphisms of $\mathbb{R}^{2n}$ such that $\sigma
(Sz,Sz^{\prime})=\sigma(z,z^{\prime})$ for all $z,z^{\prime}\in\mathbb{R}%
^{2n}$. Working in the canonical basis $\operatorname*{Sp}(n)$ is identified
with the group of all real $2n\times2n$ matrices $S$ such that $S^{T}JS=J$
(or, equivalently, $SJS^{T}=J$).

We will write $\mathrm{d}z=\mathrm{d}x\mathrm{d}p$ where $\mathrm{d}%
x=\mathrm{d}x_{1}\cdot\cdot\cdot\mathrm{d}x_{n}$ and $\mathrm{d}%
p=\mathrm{d}p_{1}\cdot\cdot\cdot\mathrm{d}p_{n}$. The scalar product on
$L^{2}(\mathbb{R}^{n})$ is defined by%
\[
(\psi|\phi)=\int_{\mathbb{R}^{n}}\psi(x)\overline{\phi(x)}\mathrm{d}x
\]
and the associated norm is denoted by $||\cdot||$. The Schwartz space of
rapidly decreasing functions is denoted by $\mathcal{S}(\mathbb{R}^{n})$ and
its dual (the space of tempered distributions) by $\mathcal{S}^{\prime
}(\mathbb{R}^{n})$.

\section{Gabor Frames}

Gabor frames are a generalization of the usual notion of basis; see for
instance Gr\"{o}chenig \cite{Gro}, Feichtinger and Gr\"{o}chenig \cite{fegro},
Balan \textit{et al.} \cite{balan}, Heil \cite{Heil}, Casazza \cite{casa} for
a detailed treatment of this topic. In what follows we give a slightly
modified version of the usual definition, better adapted to the study of
symplectic symmetries.

\subsection{Definition}

Let $\phi$ be a non-zero square integrable function (hereafter called
\textit{window}) on ${\mathbb{R}}^{n}$, and a lattice $\Lambda$ in
${\mathbb{R}}^{2n}$, \textit{i.e.} a discrete subset of ${\mathbb{R}}^{2n}$.
The associated $\hbar$-Gabor system is the set of square-integrable functions
\[
{\mathcal{G}}(\phi,\Lambda)=\{\widehat{T}^{\hbar}(z)\phi:z\in\Lambda\}
\]
where $\widehat{T}^{\hbar}(z)=e^{-i\sigma(\hat{z},z)/\hbar}$ is the Heisenberg
operator. The action of this operator is explicitly given by the formula
\begin{equation}
\widehat{T}^{\hbar}(z_{0})\phi(x)=e^{i(p_{0}x-p_{0}x_{0}/2)/\hbar}\phi
(x-x_{0}) \label{heiwe}%
\end{equation}
(see \textit{e.g.} \cite{Birk,Birkbis,Littlejohn}; it will be justified in
section \ref{sectrans}). We will call the Gabor system ${\mathcal{G}%
}(g,\Lambda)$ a $\hbar$-frame for $L^{2}({\mathbb{R}}^{n})$, if there exist
constants $a,b>0$ (the \textit{frame bounds}) such that
\begin{equation}
a||\psi||^{2}\leq\sum_{z_{0}\in\Lambda}|(\psi|\widehat{T}^{\hbar}(z_{0}%
)\phi)|^{2}\leq b||\psi||^{2} \label{frame1}%
\end{equation}
for every square integrable function $\psi$ on ${\mathbb{R}}^{n}$. When $a=b $
the $\hbar$-frame ${\mathcal{G}}(g,\Lambda)$ is said to be \emph{tight}.

\begin{remark}
The product $(\psi|\widehat{T}^{\hbar}(z_{0})\phi)$ is, up to the factor
$(2\pi\hbar)^{-n}$, Woodward's cross-ambiguity function \cite{wood}; it is up
to a (symplectic Fourier transform) the cross-Wigner distribution $W(\psi
,\phi)$ as was already observed by Klauder \cite{Klauder}; see
\cite{Folland,Birk,Birkbis}.
\end{remark}

\subsection{Rescaling properties}

For the choice $\hbar=1/2\pi$ the notion of $\hbar$-Gabor frame coincides with
the usual notion of Gabor frame as found in the literature. In fact, in this
case, writing $\widehat{T}(z)=\widehat{T}^{1/2\pi}(z)$ and $p=\omega$, we have%
\[
|(\psi|\widehat{T}(z)\phi)|=|(\psi|\tau(z)\phi)|
\]
where $\tau(z)$ is the modulation operator defined by%
\[
\tau(z_{0})\phi(x)=e^{2\pi i\omega_{0}x}\phi(x-x_{0})
\]
for $z_{0}=(x_{0},\omega_{0})$. The two following elementary results can be
used to go from one definition to the other:

\begin{proposition}
\label{Propres1}Let $D^{\hbar}=%
\begin{pmatrix}
I & 0\\
0 & 2\pi\hbar I
\end{pmatrix}
$. The system $\mathcal{G}(\phi,\Lambda)$ is a Gabor frame if and only if
$\mathcal{G}(\phi,D^{\hbar}\Lambda)$ is a $\hbar$-Gabor frame.
\end{proposition}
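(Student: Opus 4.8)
The plan is to reduce the equivalence to a term-by-term identity between the two defining sums in (\ref{frame1}), using the explicit formula (\ref{heiwe}) for the Heisenberg operator together with the relation $|(\psi|\widehat{T}(z)\phi)|=|(\psi|\tau(z)\phi)|$ already recorded above.

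First I would rewrite the standard Gabor frame condition for $\mathcal{G}(\phi,\Lambda)$ in terms of the modulation operators: $\mathcal{G}(\phi,\Lambda)$ is a Gabor frame if and only if there are $a,b>0$ with $a||\psi||^{2}\leq\sum_{z_{0}\in\Lambda}|(\psi|\tau(z_{0})\phi)|^{2}\leq b||\psi||^{2}$ for all $\psi\in L^{2}(\mathbb{R}^{n})$, where $z_{0}=(x_{0},\omega_{0})$ and $\tau(z_{0})\phi(x)=e^{2\pi i\omega_{0}x}\phi(x-x_{0})$. Next, for such a $z_{0}$ put $w_{0}=D^{\hbar}z_{0}=(x_{0},2\pi\hbar\omega_{0})$ and apply (\ref{heiwe}): a direct computation gives $\widehat{T}^{\hbar}(w_{0})\phi(x)=e^{i(2\pi\hbar\omega_{0}x-\pi\hbar\omega_{0}x_{0})/\hbar}\phi(x-x_{0})=e^{-\pi i\omega_{0}x_{0}}\tau(z_{0})\phi(x)$, so $\widehat{T}^{\hbar}(D^{\hbar}z_{0})\phi$ and $\tau(z_{0})\phi$ differ only by a unimodular constant. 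Hence $|(\psi|\widehat{T}^{\hbar}(D^{\hbar}z_{0})\phi)|=|(\psi|\tau(z_{0})\phi)|$ for every $\psi$ and every $z_{0}\in\mathbb{R}^{2n}$.

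Finally, since $D^{\hbar}$ is a linear automorphism of $\mathbb{R}^{2n}$ the map $z_{0}\mapsto D^{\hbar}z_{0}$ is a bijection of $\Lambda$ onto $D^{\hbar}\Lambda$, so summing the previous identity over $\Lambda$ yields $\sum_{w_{0}\in D^{\hbar}\Lambda}|(\psi|\widehat{T}^{\hbar}(w_{0})\phi)|^{2}=\sum_{z_{0}\in\Lambda}|(\psi|\tau(z_{0})\phi)|^{2}$ for all $\psi$. Therefore the frame inequalities for $\mathcal{G}(\phi,\Lambda)$ and the $\hbar$-frame inequalities (\ref{frame1}) for $\mathcal{G}(\phi,D^{\hbar}\Lambda)$ hold simultaneously, with the very same bounds $a,b$, which proves both implications at once. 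There is no genuine obstacle here; the only points that need care are the bookkeeping of the unimodular phase factor coming out of (\ref{heiwe}) and the remark that the frame bounds are preserved exactly, not just up to a multiplicative constant.
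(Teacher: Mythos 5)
Your proof is correct and follows essentially the same route as the paper's: both rest on the rescaling identity $\widehat{T}^{\hbar}(x_{0},2\pi\hbar\omega_{0})=\widehat{T}^{1/2\pi}(x_{0},\omega_{0})$ (which you verify explicitly from (\ref{heiwe}), up to the harmless unimodular phase $e^{-\pi i\omega_{0}x_{0}}$ coming from passing through $\tau(z_{0})$, whereas the paper simply asserts it) followed by re-indexing the sum via the bijection $z_{0}\mapsto D^{\hbar}z_{0}$. The phase bookkeeping and the observation that the bounds $a,b$ are preserved exactly are both handled correctly.
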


\begin{proof}
We have $\widehat{T}^{\hbar}(x_{0},2\pi\hbar p_{0})=\widehat{T}(x_{0},p_{0})$
where $\widehat{T}(x_{0},p_{0})=\widehat{T}^{1/2\pi}(x_{0},p_{0})$. By
definition $\mathcal{G}(\phi,\Lambda)$ is a Gabor frame if and only if%
\[
a||\psi||^{2}\leq\sum_{z_{0}\in\Lambda}|(\psi|\widehat{T}(z_{0})\phi)|^{2}\leq
b||\psi||^{2}%
\]
for every $\psi\in L^{2}(\mathbb{R}^{n})$ that is%
\[
a||\psi||^{2}\leq\sum_{(x_{0},p_{0})\in\Lambda}|(\psi|\widehat{T}(x_{0}%
,p_{0})\phi)|^{2}\leq b||\psi||^{2};
\]
this inequality is equivalent to%
\[
a||\psi||^{2}\leq\sum_{(x_{0},p_{0})\in\Lambda}|(\psi|\widehat{T}^{\hbar
}(x_{0},2\pi\hbar p_{0})\phi)|^{2}\leq b||\psi||^{2}%
\]
that is to%
\[
a||\psi||^{2}\leq\sum_{(x_{0},(2\pi\hbar)^{-1}p_{0})\in\Lambda}|(\psi
|\widehat{T}^{\hbar}(x_{0},p_{0})\phi)|^{2}\leq b||\psi||^{2}%
\]
hence the result since $(x_{0},(2\pi\hbar)^{-1}p_{0})\in\Lambda$ means that
$(x_{0},p_{0})\in D^{\hbar}\Lambda$.
\end{proof}

We can also rescale simultaneously the lattice and the window
(\textquotedblleft change of Planck's constant\textquotedblright):

\begin{proposition}
\label{Propres2}Let $\mathcal{G}(\phi,\Lambda)$ be a Gabor system, and set
\begin{equation}
\phi^{\hbar}(x)=(2\pi\hbar)^{-n/2}\phi(x/\sqrt{2\pi\hbar}). \label{fih}%
\end{equation}
Then $\mathcal{G}(\phi,\Lambda)$ is a frame if and only if $\mathcal{G}%
(\phi^{\hbar},\sqrt{2\pi\hbar}\Lambda)$ is a $\hbar$-frame.
\end{proposition}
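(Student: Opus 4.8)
The plan is to exhibit the map $\phi\longmapsto\phi^{\hbar}$ as a unitary dilation (up to an irrelevant positive scalar) and then to transport the Heisenberg operators through this dilation. Put $\lambda=\sqrt{2\pi\hbar}$ and let $U_{\lambda}$ be the unitary dilation on $L^{2}(\mathbb{R}^{n})$ given by $U_{\lambda}g(x)=\lambda^{-n/2}g(x/\lambda)$, with inverse $U_{\lambda}^{-1}=U_{1/\lambda}$. Comparing with (\ref{fih}) one sees that $\phi^{\hbar}=c\,U_{\lambda}\phi$ for a positive constant $c=c(n,\hbar)$; since multiplying a window by a nonzero scalar multiplies both sides of the frame inequality (\ref{frame1}) by $|c|^{2}$ and so does not affect the frame property (it only rescales the bounds), the first step is to reduce the claimed equivalence to ``$\mathcal{G}(\phi,\Lambda)$ is a frame $\iff\mathcal{G}(U_{\lambda}\phi,\sqrt{2\pi\hbar}\,\Lambda)$ is an $\hbar$-frame''.

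The heart of the argument is the intertwining identity
\[
\widehat{T}^{\hbar}(\lambda z_{0})\,U_{\lambda}=U_{\lambda}\,\widehat{T}(z_{0}),\qquad z_{0}=(x_{0},p_{0})\in\mathbb{R}^{2n},
\]
where $\lambda z_{0}=(\lambda x_{0},\lambda p_{0})$ and $\widehat{T}(z_{0})=\widehat{T}^{1/2\pi}(z_{0})$. I would verify it by applying both sides to an arbitrary $\phi$ and using the explicit formula (\ref{heiwe}): the translation part forces the spatial variable to be rescaled by $\lambda$, and matching the exponential phases forces the momentum variable to be rescaled by $\lambda$ as well, the relation $\lambda^{2}=2\pi\hbar$ being precisely what makes the two phases agree — both the term linear in $x$ and the $x$-independent factor $e^{-ip_{0}x_{0}/2\hbar}$. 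Equivalently $\widehat{T}^{\hbar}(\lambda z_{0})=U_{\lambda}\widehat{T}(z_{0})U_{\lambda}^{-1}$, which is the metaplectic shadow of the fact that the dilation $z\mapsto\lambda^{-1}z$ is the symplectic matrix implementing the ``change of Planck's constant''.

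Granting this, the conclusion follows from the unitarity of $U_{\lambda}$. For $\psi\in L^{2}(\mathbb{R}^{n})$ one has $(\psi|\widehat{T}^{\hbar}(\lambda z_{0})U_{\lambda}\phi)=(U_{\lambda}^{-1}\psi|\widehat{T}(z_{0})\phi)$; summing $|\cdot|^{2}$ over $z_{0}\in\Lambda$, and noting that $\lambda z_{0}$ then runs exactly through $\sqrt{2\pi\hbar}\,\Lambda$, gives
\[
\sum_{z\in\sqrt{2\pi\hbar}\,\Lambda}|(\psi|\widehat{T}^{\hbar}(z)U_{\lambda}\phi)|^{2}=\sum_{z_{0}\in\Lambda}|(U_{\lambda}^{-1}\psi|\widehat{T}(z_{0})\phi)|^{2}.
\]
Since $\psi\longmapsto U_{\lambda}^{-1}\psi$ is a norm-preserving bijection of $L^{2}(\mathbb{R}^{n})$, the frame inequality for $\mathcal{G}(U_{\lambda}\phi,\sqrt{2\pi\hbar}\,\Lambda)$ holding for all $\psi$ with bounds $a,b$ is equivalent, after the substitution $\psi'=U_{\lambda}^{-1}\psi$, to the frame inequality for $\mathcal{G}(\phi,\Lambda)$ holding for all $\psi'$ with the same bounds; reinstating the scalar $c$ merely turns $a,b$ into $|c|^{2}a,|c|^{2}b$. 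This proves the equivalence.

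I do not expect a genuine obstacle here: the only step that is not purely formal is the phase bookkeeping in the intertwining identity, where one must match correctly the normalisation $\widehat{T}^{\hbar}(z)=e^{-i\sigma(\hat z,z)/\hbar}$ of (\ref{heiwe}) on both sides; everything else reduces to the unitarity of the dilation and to the fact that $\psi$ ranges over all of $L^{2}(\mathbb{R}^{n})$. One could alternatively chain Proposition \ref{Propres1} with a pure rescaling of the lattice, but the direct computation above seems cleaner and more transparent.
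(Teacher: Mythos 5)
Your proof is correct, and it reaches the conclusion by a more self-contained route than the paper. The paper first identifies $\phi\longmapsto\phi^{\hbar}$ with the metaplectic dilation $\widehat{M}_{1/\sqrt{2\pi\hbar}I,0}$, applies the symplectic covariance formula (\ref{symco}) at fixed $\hbar$ to move that operator from the window onto $\psi$ (using its unitarity), and then recognizes the resulting lattice as $D^{\hbar}\Lambda$ so that Proposition \ref{Propres1} finishes the job; you instead compress the two rescalings (of the window and of Planck's constant) into the single intertwining identity $\widehat{T}^{\hbar}(\lambda z_{0})U_{\lambda}=U_{\lambda}\widehat{T}(z_{0})$ with $\lambda^{2}=2\pi\hbar$, verified directly from (\ref{heiwe}), and never need (\ref{symco}) or Proposition \ref{Propres1}. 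What the paper's route buys is reuse of machinery that is central to the rest of the article (metaplectic operators and their covariance); what yours buys is an elementary, one-step computation in which the role of the relation $\lambda^{2}=2\pi\hbar$ in matching the phases is made completely explicit. A further small point in your favour: the unitary dilation satisfies $U_{\lambda}\phi(x)=(2\pi\hbar)^{-n/4}\phi(x/\sqrt{2\pi\hbar})$, so the window (\ref{fih}) is $(2\pi\hbar)^{-n/4}U_{\lambda}\phi$ rather than $U_{\lambda}\phi$ itself; you account for this scalar (it only rescales the frame bounds), whereas the paper writes $\phi^{\hbar}=\widehat{M}_{1/\sqrt{2\pi\hbar}I,0}\phi$ and silently drops the same harmless constant.
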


\begin{proof}
We have $\phi^{\hbar}=\widehat{M}_{1/\sqrt{2\pi\hbar}I,0}\phi$ where
$\widehat{M}_{1/\sqrt{2\pi\hbar}I,0}\in\operatorname*{Mp}(n)$ has projection
\[
M_{1/\sqrt{2\pi\hbar}}=%
\begin{pmatrix}
(2\pi\hbar)^{1/2}I & 0\\
0 & (2\pi\hbar)^{-1/2}I
\end{pmatrix}
\]
on $\operatorname*{Sp}(n)$ (see Appendix A). The Gabor system $\mathcal{G}%
(\phi^{\hbar},\sqrt{2\pi\hbar}\Lambda)$ is a $\hbar$-frame if and only%
\[
a||\psi||^{2}\leq\sum_{z_{0}\in\sqrt{2\pi\hbar}\Lambda}|(\psi|\widehat
{T}(z_{0})\widehat{M}_{1/\sqrt{2\pi\hbar}I,0}\phi)|^{2}\leq b||\psi||^{2}%
\]
for every $\psi\in L^{2}(\mathbb{R}^{n})$, that is, taking the symplectic
covariance formula (\ref{symco}) into account, if and only if
\[
a||\psi||^{2}\leq\sum_{z_{0}\in\sqrt{2\pi\hbar}\Lambda}|(\widehat{M}%
_{\sqrt{2\pi\hbar}I,0}\psi|\widehat{T}((2\pi\hbar)^{-1/2}x_{0},(2\pi
\hbar)^{1/2}p_{0})\phi)|^{2}\leq b||\psi||^{2}.
\]
But this is inequality is equivalent to%
\[
a||\psi||^{2}\leq\sum_{z_{0}\in D^{\hbar}\Lambda}|(\psi|\widehat{T}(z_{0}%
)\phi)|^{2}\leq b||\psi||^{2}%
\]
and one concludes using Proposition \ref{Propres1}.
\end{proof}

\begin{remark}
In Appendix A we state a rescaling property for the covering projection
$\pi^{\hbar}:$ $\widehat{S}\longmapsto S$ of metaplectic group
$\operatorname*{Mp}(n)$ onto $\operatorname*{Sp}(n)$ (formula (\ref{pich})).
\end{remark}

\section{Symplectic Covariance}

The following formula will play a fundamental role in our study of symplectic
covariance properties of frames. It relates Heisenberg--Weyl operators, linear
symplectic transformations, and metaplectic operators (we refer to Appendix A
for a concise review of the metaplectic group $\operatorname*{Mp}(n)$ and its properties).

Let $\widehat{S}\in\operatorname*{Mp}(n)$ have projection $\pi^{\hbar
}(\widehat{S})=S\in\operatorname*{Sp}(n)$. Then%

\begin{equation}
\widehat{T}^{\hbar}(z)\widehat{S}=\widehat{S}\widehat{T}^{\hbar}(S^{-1}z).
\label{symco}%
\end{equation}

For a proof see \textit{e.g.} \cite{Birk,Birkbis,Littlejohn}; one easy way is
to prove this formula separately for each generator $\widehat{J}$,
$\widehat{M}_{L,m}$, $\widehat{V}_{P}$ of the metaplectic group.

\subsection{A first covariance result}

Gabor frames behave well under symplectic transformations of the lattice (or,
equivalently, under metaplectic transformations of the window). Let
$\widehat{S}\in\operatorname*{Mp}(n)$ have projection $S\in\operatorname*{Sp}%
(n)$. The following result is well-known, and appears in many places in the
literature (see \textit{e.g.} Gr\"{o}chenig \cite{Gro}, Pfander \textit{et
al.} \cite{goetz}). Our proof is somewhat simpler since it exploits the
symplectic covariance property of the Heisenberg--Weyl operators.

\begin{proposition}
\label{prop2}Let $\phi\in L^{2}(\mathbb{R}^{n})$ (or $\phi\in\mathcal{S}%
(\mathbb{R}^{n})$). A Gabor system $\mathcal{G}(\phi,\Lambda)$ is a $\hbar
$-frame if and only if $\mathcal{G}(\widehat{S}\phi,S\Lambda)$ is a $\hbar
$-frame; when this is the case both frames have the same bounds. In
particular, $\mathcal{G}(\phi,\Lambda)$ is a tight $\hbar$-frame if and only
if $\mathcal{G}(\phi,\Lambda)$ is.
\end{proposition}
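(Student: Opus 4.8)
The plan is to reduce everything to the symplectic covariance formula (\ref{symco}) and the fact that metaplectic operators are unitary on $L^2(\mathbb{R}^n)$. First I would rewrite the middle term of the frame inequality (\ref{frame1}) for the system $\mathcal{G}(\widehat{S}\phi,S\Lambda)$: it is $\sum_{w_0\in S\Lambda}|(\psi|\widehat{T}^{\hbar}(w_0)\widehat{S}\phi)|^2$. I then substitute $w_0=Sz_0$ with $z_0$ ranging over $\Lambda$ (this is a bijection since $S$ is invertible), so the sum becomes $\sum_{z_0\in\Lambda}|(\psi|\widehat{T}^{\hbar}(Sz_0)\widehat{S}\phi)|^2$.

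Next I would apply (\ref{symco}) in the form $\widehat{T}^{\hbar}(Sz_0)\widehat{S} = \widehat{S}\,\widehat{T}^{\hbar}(S^{-1}Sz_0) = \widehat{S}\,\widehat{T}^{\hbar}(z_0)$, which turns the summand into $|(\psi|\widehat{S}\,\widehat{T}^{\hbar}(z_0)\phi)|^2$. Since $\widehat{S}$ is unitary, $(\psi|\widehat{S}\,\widehat{T}^{\hbar}(z_0)\phi) = (\widehat{S}^{*}\psi|\widehat{T}^{\hbar}(z_0)\phi)$, and writing $\chi = \widehat{S}^{*}\psi$ we get $\sum_{z_0\in\Lambda}|(\chi|\widehat{T}^{\hbar}(z_0)\phi)|^2$. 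Moreover $\|\chi\| = \|\psi\|$ because $\widehat{S}^{*}$ is unitary. Therefore the frame inequality $a\|\psi\|^2 \le \sum_{w_0\in S\Lambda}|(\psi|\widehat{T}^{\hbar}(w_0)\widehat{S}\phi)|^2 \le b\|\psi\|^2$ holding for all $\psi\in L^2(\mathbb{R}^n)$ is equivalent, via the substitution $\psi\mapsto\chi=\widehat{S}^{*}\psi$ (a bijection of $L^2(\mathbb{R}^n)$ onto itself), to $a\|\chi\|^2 \le \sum_{z_0\in\Lambda}|(\chi|\widehat{T}^{\hbar}(z_0)\phi)|^2 \le b\|\chi\|^2$ for all $\chi\in L^2(\mathbb{R}^n)$, which is precisely the statement that $\mathcal{G}(\phi,\Lambda)$ is a $\hbar$-frame with the same bounds $a,b$. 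Taking $a=b$ gives the tightness statement. (I would also note the Schwartz-space case is automatic since $\widehat{S}$ preserves $\mathcal{S}(\mathbb{R}^n)$, though for the frame inequality itself one works on all of $L^2$.)

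Honestly, I do not expect any real obstacle here: the proof is essentially a change of variables in the index set combined with unitarity, and the only thing to be careful about is bookkeeping the two substitutions (one in the lattice sum, one in the test function) and checking that they go in compatible directions so that the "if and only if" is genuinely an equivalence rather than a one-way implication. The last sentence of the proposition as stated ("$\mathcal{G}(\phi,\Lambda)$ is a tight $\hbar$-frame if and only if $\mathcal{G}(\phi,\Lambda)$ is") appears to contain a typo — it should read "if and only if $\mathcal{G}(\widehat{S}\phi,S\Lambda)$ is" — and I would simply prove that corrected version, which follows immediately from the equal-bounds assertion.
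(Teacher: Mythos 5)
Your proof is correct and follows essentially the same route as the paper: both rest on the symplectic covariance formula (\ref{symco}) together with the unitarity of $\widehat{S}$, differing only in bookkeeping (you reindex the lattice sum before applying covariance and write $\widehat{S}^{*}$ where the paper writes $\widehat{S}^{-1}$, which coincide for unitary operators). Your observation that the final sentence of the proposition contains a typo --- it should read ``if and only if $\mathcal{G}(\widehat{S}\phi,S\Lambda)$ is'' --- is also correct.
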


\begin{proof}
Using the symplectic covariance formula (\ref{symco}) we have%
\begin{align*}
\sum_{z\in S\Lambda}|(\psi|\widehat{T}^{\hbar}(z)\widehat{S}\phi)|^{2}  &
=\sum_{z\in S\Lambda}|(\psi|\widehat{S}\widehat{T}^{\hbar}(S^{-1}z)\phi
)|^{2}\\
&  =\sum_{z\in\Lambda}|(\widehat{S}^{-1}\psi|\widehat{T}^{\hbar}(z)\phi)|^{2}%
\end{align*}
and hence, since $\mathcal{G}(\phi,\Lambda)$ is a $\hbar$-frame,
\[
a||\widehat{S}^{-1}\psi||^{2}\leq\sum_{z\in S\Lambda}|(\psi|\widehat{T}%
^{\hbar}(z)\widehat{S}\phi)|^{2}\leq b||\widehat{S}^{-1}\psi||^{2}.
\]
The result follows since $||\widehat{S}^{-1}\psi||=||\psi||$ because
metaplectic operators are unitary; the case $\phi\in\mathcal{S}(\mathbb{R}%
^{n})$ is similar since metaplectic operators are linear automorphisms of
$\mathcal{S}(\mathbb{R}^{n})$.
\end{proof}

\begin{remark}
The result above still holds when one assumes that the window $\phi$ belongs
to the Feichtinger algebra $S_{0}(\mathbb{R}^{n})$ (see Appendix B and the
discussion at the end of the paper).
\end{remark}

\subsection{Gaussian frames}

The problem of constructing Gabor frames $\mathcal{G}(\phi,\Lambda)$ in
$L^{2}(\mathbb{R})$ with an arbitrary window $\phi$ and lattice $\Lambda$ is
difficult and has been tackled by many authors (see for instance the comments
in \cite{GroLyu}, also \cite{goetz}). Very little is known about the existence
of frames in the general case. We however have the following characterization
of Gaussian frames which extends a classical result of Lyubarskii \cite{Lyu}
and Seip and Wallst\'{e}n \cite{sewa92}:

\begin{proposition}
\label{prop3}Let $\phi_{0}^{\hbar}(x)=(\pi\hbar)^{-n/4}e^{-|x|^{2}/2\hbar}$
(the standard centered Gaussian) and $\Lambda_{\alpha\beta}=\alpha
\mathbb{Z}^{n}\times\beta\mathbb{Z}^{n}$ with $\alpha=(\alpha_{1}%
,...,\alpha_{n}\mathbb{)}$ and $\beta=(\beta_{1},...,\beta_{n}\mathbb{)}$.
Then $\mathcal{G}(\phi_{0}^{\hbar},\Lambda_{\alpha\beta})$ is a frame if and
only if $\alpha_{j}\beta_{j}<2\pi\hbar$ for $1\leq j\leq n$.
\end{proposition}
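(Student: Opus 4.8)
The plan is to deduce the statement from the classical one‑dimensional theorem of Lyubarskii and Seip--Wallst\'{e}n by two successive reductions: first a rescaling that normalizes Planck's constant, and then a tensor‑product factorization that reduces the dimension to $n=1$. Throughout, I read ``frame'' in the statement as ``$\hbar$‑frame'', consistently with the setup.

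First I would invoke the rescaling Proposition \ref{Propres2}. Writing $g(x)=e^{-\pi|x|^{2}}$, a direct computation with (\ref{fih}) shows that the $\hbar$‑rescaled window $g^{\hbar}$ is a nonzero scalar multiple of $\phi_{0}^{\hbar}$; since multiplying a window by a nonzero constant only rescales the frame bounds, $\mathcal{G}(\phi_{0}^{\hbar},\Lambda_{\alpha\beta})$ is a $\hbar$‑frame if and only if $\mathcal{G}(g^{\hbar},\Lambda_{\alpha\beta})$ is. Applying Proposition \ref{Propres2} with $\Lambda=(2\pi\hbar)^{-1/2}\Lambda_{\alpha\beta}=\Lambda_{\alpha'\beta'}$, where $\alpha'=(2\pi\hbar)^{-1/2}\alpha$ and $\beta'=(2\pi\hbar)^{-1/2}\beta$, this is in turn equivalent to $\mathcal{G}(g,\Lambda_{\alpha'\beta'})$ being an ordinary Gabor frame for $L^{2}(\mathbb{R}^{n})$. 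Since $\alpha'_{j}\beta'_{j}=\alpha_{j}\beta_{j}/2\pi\hbar$, the asserted inequalities $\alpha_{j}\beta_{j}<2\pi\hbar$ are exactly $\alpha'_{j}\beta'_{j}<1$, so it suffices to prove that $\mathcal{G}(g,\Lambda_{\alpha'\beta'})$ is a frame if and only if $\alpha'_{j}\beta'_{j}<1$ for every $j$.

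Next I would factor the problem over the coordinates. The relabeling $(x_{1},\dots,x_{n},p_{1},\dots,p_{n})\mapsto(x_{1},p_{1},\dots,x_{n},p_{n})$ is a linear symplectomorphism (an instance of the situation covered by Proposition \ref{prop2}), under which $\Lambda_{\alpha'\beta'}$ becomes the product $\prod_{j}(\alpha'_{j}\mathbb{Z}\times\beta'_{j}\mathbb{Z})$, the Gaussian $g$ becomes the tensor product $g_{1}\otimes\cdots\otimes g_{1}$ of the one‑dimensional Gaussians $g_{1}(t)=e^{-\pi t^{2}}$, and formula (\ref{heiwe}) shows that $\widehat{T}(z)g=\widehat{T}(z^{(1)})g_{1}\otimes\cdots\otimes\widehat{T}(z^{(n)})g_{1}$ with $z^{(j)}=(x_{j},p_{j})$. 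A one‑dimensional Gaussian lies in the Feichtinger algebra $S_{0}(\mathbb{R})$ (Appendix B), so each $\mathcal{G}(g_{1},\alpha'_{j}\mathbb{Z}\times\beta'_{j}\mathbb{Z})$ is a Bessel system and its frame operator $S_{j}$ is bounded. Hence the $n$‑dimensional frame operator $S=\sum_{z}(\cdot\,|\widehat{T}(z)g)\,\widehat{T}(z)g$ agrees on pure tensors with the bounded operator $S_{1}\otimes\cdots\otimes S_{n}$, and since pure tensors span a dense subspace, $S=S_{1}\otimes\cdots\otimes S_{n}$ as bounded operators; in particular $\mathcal{G}(g,\Lambda_{\alpha'\beta'})$ is Bessel. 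For bounded positive operators the spectrum of a tensor product is the closure of the product of the spectra, so $S$ is bounded and boundedly invertible --- that is, $\mathcal{G}(g,\Lambda_{\alpha'\beta'})$ is a frame --- if and only if each $S_{j}$ is, i.e. if and only if each $\mathcal{G}(g_{1},\alpha'_{j}\mathbb{Z}\times\beta'_{j}\mathbb{Z})$ is a frame for $L^{2}(\mathbb{R})$.

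Finally I would quote the classical theorem of Lyubarskii \cite{Lyu} and Seip--Wallst\'{e}n \cite{sewa92}: for the Gaussian $g_{1}(t)=e^{-\pi t^{2}}$, the system $\mathcal{G}(g_{1},a\mathbb{Z}\times b\mathbb{Z})$ is a frame for $L^{2}(\mathbb{R})$ precisely when $ab<1$. Applied with $a=\alpha'_{j}$, $b=\beta'_{j}$, this combined with the two reductions gives the stated criterion $\alpha_{j}\beta_{j}<2\pi\hbar$ for all $j$. The only genuinely analytic input is the one‑dimensional density/sampling result, which is imported; within the present argument the steps that require a little care are the passage from pure tensors to arbitrary $L^{2}$‑functions in the tensor factorization (handled via the Bessel property just noted) and the bookkeeping of the normalization constant in the rescaling step.
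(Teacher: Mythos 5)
Your argument is correct and follows essentially the same route as the paper: reduce to $\hbar=1/2\pi$ via the rescaling Proposition \ref{Propres2} and then to $n=1$ by a tensor-product factorization, importing the one-dimensional Gaussian density theorem of Lyubarskii \cite{Lyu} and Seip--Wallst\'{e}n \cite{sewa92}. The only difference is one of detail: the paper simply cites Bourouihiya \cite{AB2} for the multidimensional case at $\hbar=1/2\pi$, whereas you carry out the tensor-product reduction explicitly (via the spectrum of the tensor product of the one-dimensional frame operators), which is a sound way to fill in that citation.
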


\begin{proof}
Bourouihiya \cite{AB2} proves this for $\hbar=1/2\pi$; the result for
arbitrary $\hbar>0$ follows using Proposition \ref{Propres2}.
\end{proof}

It turns out that using the result above one can construct infinitely many
symplectic Gaussian frames using the theory of metaplectic operators:

\begin{proposition}
\label{prop4}Let $\phi_{0}^{\hbar}$ be the standard Gaussian. The Gabor system
$\mathcal{G}(\phi_{0}^{\hbar},\Lambda_{\alpha\beta})$ is a frame if and only
if $\mathcal{G}(\widehat{S}\phi_{0}^{\hbar},S\Lambda_{\alpha\beta})$ is a
frame (with same bounds) for every $\widehat{S}\in\operatorname*{Mp}(n)$.
Writing $S$ in block-matrix form $%
\begin{pmatrix}
A & B\\
C & D
\end{pmatrix}
$ the window $\widehat{S}\phi_{0}^{\hbar}$ is the Gaussian%
\begin{equation}
\widehat{S}\phi_{0}^{\hbar}(x)=\left(  \tfrac{1}{\pi\hbar}\right)  ^{n/4}(\det
X)^{1/4}e^{-\frac{1}{2\hbar}(X+iY)x\cdot x} \label{gauss}%
\end{equation}
where%
\begin{align}
X  &  =-(CA^{T}+DB^{T})(AA^{T}+BB^{T})^{-1}\label{xa11}\\
Y  &  =(AA^{T}+BB^{T})^{-1} \label{ya11}%
\end{align}
are symmetric matrices, and $X>0$.
\end{proposition}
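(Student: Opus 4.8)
I would handle the equivalence of the two frame conditions, together with the equality of the bounds, in one line: it is the special case $\phi=\phi_{0}^{\hbar}$, $\Lambda=\Lambda_{\alpha\beta}$ of Proposition~\ref{prop2}, and combining it with Proposition~\ref{prop3} one even sees that the common condition is $\alpha_{j}\beta_{j}<2\pi\hbar$ for $1\leq j\leq n$. The substance of the statement is therefore the explicit formula (\ref{gauss}) for $\widehat{S}\phi_{0}^{\hbar}$, and this is what I would concentrate on.

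The plan there is to invoke the classical fact that a metaplectic operator sends a centered (generalized) Gaussian to another one, its complex ``covariance'' parameter transforming by the fractional-linear action of $S$ on the Siegel upper half-space (see Appendix~A and \cite{Birk,Littlejohn}; this can also be checked generator by generator on $\widehat{J},\widehat{M}_{L,m},\widehat{V}_{P}$). Writing $S=\begin{pmatrix}A & B\\ C & D\end{pmatrix}$ and feeding in $\phi_{0}^{\hbar}(x)=(\pi\hbar)^{-n/4}e^{-|x|^{2}/2\hbar}$, this produces $\widehat{S}\phi_{0}^{\hbar}(x)=c\,e^{-\frac{1}{2\hbar}(X+iY)x\cdot x}$ with $X+iY=(D-iC)(A+iB)^{-1}$. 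First I would verify that $A+iB$ is invertible: multiplying by $A^{T}-iB^{T}$ and using the symplectic relation $AB^{T}=BA^{T}$ gives $(A+iB)(A^{T}-iB^{T})=AA^{T}+BB^{T}$, and the latter is positive definite since the block $(A\ B)$ has full row rank $n$ ($S$ being invertible). Hence $(A+iB)^{-1}=(A^{T}-iB^{T})(AA^{T}+BB^{T})^{-1}$, and, using in addition $DA^{T}-CB^{T}=I$ (a consequence of $SJS^{T}=J$), a short computation gives $(D-iC)(A^{T}-iB^{T})=I-i(CA^{T}+DB^{T})$, so that
\[
X+iY=\bigl(I-i(CA^{T}+DB^{T})\bigr)(AA^{T}+BB^{T})^{-1},
\]
i.e.\ $X=(AA^{T}+BB^{T})^{-1}$ and $Y=-(CA^{T}+DB^{T})(AA^{T}+BB^{T})^{-1}$, which are (\ref{xa11})--(\ref{ya11}). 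That $X$ is symmetric and $X>0$ is then immediate from $AA^{T}+BB^{T}>0$; the symmetry of $Y$ I would obtain either by a direct manipulation of $S^{T}JS=SJS^{T}=J$ or from the fact that $X+iY$ must lie in the Siegel half-space.

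It then remains to pin down the constant $c$. Its modulus is forced by unitarity of $\widehat{S}$: from $1=||\widehat{S}\phi_{0}^{\hbar}||^{2}=|c|^{2}\int_{\mathbb{R}^{n}}e^{-\frac{1}{\hbar}Xx\cdot x}\,\mathrm{d}x=|c|^{2}(\pi\hbar)^{n/2}(\det X)^{-1/2}$ one reads off $|c|=(\pi\hbar)^{-n/4}(\det X)^{1/4}$, matching (\ref{gauss}). Its argument is the metaplectic (Maslov-type) phase and depends on the choice of $\widehat{S}$ in the two-element fiber over $S$; for the appropriate choice it is trivial, which I would confirm by propagating it through the three generators (a one-line Gaussian integral for each) or by quoting \cite{Birk}. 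This last point is the only step that is not routine linear algebra, so it is where I expect the (minor) difficulty to lie.
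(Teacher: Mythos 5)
Your proposal is correct and follows essentially the same route as the paper: the frame equivalence (with equal bounds) is Proposition \ref{prop2} specialized to $\phi=\phi_{0}^{\hbar}$, and the Gaussian formula comes from the action of $\operatorname*{Mp}(n)$ on generalized coherent states via the fractional-linear action of $S$ on the Siegel half-space, which is exactly what the paper's two-line proof delegates to formulas (\ref{xaa})--(\ref{xbb}) of Appendix A. The value of your version is that you actually carry out that computation, and doing so exposes two points the paper glosses over. First, what you derive is $X=(AA^{T}+BB^{T})^{-1}$ and $Y=-(CA^{T}+DB^{T})(AA^{T}+BB^{T})^{-1}$, i.e.\ (\ref{xa11})--(\ref{ya11}) with the roles of $X$ and $Y$ interchanged; so you should not say your formulas ``are (\ref{xa11})--(\ref{ya11})'' --- they are the corrected versions. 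Your assignment is the one forced by the rest of the statement: the real part of the coefficient in $e^{-\frac{1}{2\hbar}(X+iY)x\cdot x}$ must be the positive-definite matrix $(AA^{T}+BB^{T})^{-1}$ for $\widehat{S}\phi_{0}^{\hbar}$ to be square integrable, for the claim $X>0$ to hold, and for the normalization $(\det X)^{1/4}$ to come out right (as your unitarity computation confirms). Second, your caveat about the overall phase is genuine: formula (\ref{gauss}) as written can only hold for one of the two elements of $\operatorname*{Mp}(n)$ over $S$ (the other produces an extra factor $-1$), a point that neither the paper's proof nor its Appendix A addresses; since the frame condition only involves $|(\psi|\widehat{T}^{\hbar}(z)\widehat{S}\phi_{0}^{\hbar})|$, this is harmless for the proposition, but it is right to flag it.
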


\begin{proof}
That $\mathcal{G}(\phi_{0}^{\hbar},\Lambda_{\alpha\beta})$ is a frame if and
only if $\mathcal{G}(\widehat{S}\phi_{0}^{\hbar},S\Lambda_{\alpha\beta})$ is a
frame follows from Proposition \ref{prop2}. To calculate $\widehat{S}\phi
_{0}^{\hbar}$ it suffices to apply formulas (\ref{xaa}) and (\ref{xbb}) in
Appendix A.
\end{proof}

\subsection{An example\label{subsecex}}

Let us choose $\hbar=1/2\pi$ and consider the rotations%
\begin{equation}
S_{t}=%
\begin{pmatrix}
\cos t & \sin t\\
-\sin t & \cos t
\end{pmatrix}
\label{rot1}%
\end{equation}
(we assume $n=1$). The matrices $(S_{t})$ form a one-parameter subgroup of the
symplectic group $\operatorname*{Sp}(1)$. To $(S_{t})$ corresponds a unique
one-parameter subgroup $(\widehat{S}_{t})$ of the metaplectic group
$\operatorname*{Mp}(1)$ such that $S_{t}=\pi^{1/2\pi}(\widehat{S}_{t})$. It
follows from formula (\ref{swm}) in Appendix A that $\widehat{S}_{t}\phi$ is
given for $t\neq k\pi$ ($k$ integer) by the explicit formula%
\[
\widehat{S}_{t}\phi_{0}^{\hbar}(x)=i^{m(t)}\left(  \tfrac{1}{2\pi i|\sin
t|}\right)  ^{1/2}\int_{-\infty}^{\infty}e^{2\pi iW(x,x^{\prime},t)}\phi
_{0}^{\hbar}(x^{\prime})\mathrm{d}x^{\prime}%
\]
where $m(t)$ is an integer (the \textquotedblleft Maslov
index\textquotedblright) and%
\[
W(x,x^{\prime},t)=\frac{1}{2\sin t}\left(  (x^{2}+x^{\prime2})\cos
t-2xx^{\prime}\right)  .
\]

\begin{remark}
The metaplectic operators $\widehat{S}_{t}$ are the \textquotedblleft
fractional Fourier transforms\textquotedblright\ familiar from time-frequency
analysis (see e.g. Almeida \cite{almeida}, Namias \cite{namias}).
\end{remark}

Applying Proposition \ref{prop2} we recover without any calculation the
results of Kaiser \cite{kaiser} (Theorem 1 and Corollary 2) about rotations of
Gabor frames; in our notation:

\begin{corollary}
Let $\mathcal{G}(\phi,\Lambda)$ be a frame; then $\mathcal{G}(\widehat{S}%
\phi,S\Lambda)$ is a frame for every $\widehat{S}\in\operatorname*{Mp}(n)$.
\end{corollary}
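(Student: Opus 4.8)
The plan is to recognize that this corollary is nothing more than Proposition \ref{prop2} specialized to the value $\hbar=1/2\pi$, together with the (trivial) dictionary between the two formulations. First I would recall that, by the conventions fixed in the subsection on rescaling, a Gabor frame in the classical sense is precisely a $\hbar$-frame with $\hbar=1/2\pi$: writing $\widehat{T}(z)=\widehat{T}^{1/2\pi}(z)$ one has $|(\psi|\widehat{T}(z)\phi)|=|(\psi|\tau(z)\phi)|$, so that the inequalities (\ref{frame1}) for $\hbar=1/2\pi$ coincide with the usual frame inequalities written with the modulation operators $\tau(z)$.

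Next I would invoke the covering projection $\pi^{1/2\pi}\colon\operatorname*{Mp}(n)\to\operatorname*{Sp}(n)$: every metaplectic operator $\widehat{S}$ has a uniquely determined symplectic projection $S=\pi^{1/2\pi}(\widehat{S})\in\operatorname*{Sp}(n)$, and for this pair the symplectic covariance identity (\ref{symco}) holds. Then Proposition \ref{prop2}, applied with $\hbar=1/2\pi$ and with exactly this $\widehat{S}$ and $S$, states that $\mathcal{G}(\phi,\Lambda)$ is a frame if and only if $\mathcal{G}(\widehat{S}\phi,S\Lambda)$ is a frame, and that the frame bounds $a,b$ are the same. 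Since $\widehat{S}$ is arbitrary in $\operatorname*{Mp}(n)$, this is precisely the assertion of the corollary.

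Finally, to make contact with Kaiser's results as claimed in the surrounding text, I would specialize to $n=1$ and let $\widehat{S}=\widehat{S}_{t}$ run through the one-parameter subgroup of $\operatorname*{Mp}(1)$ covering the rotations (\ref{rot1}); the explicit integral kernel displayed above exhibits $\widehat{S}_{t}$ as the fractional Fourier transform, so that the general statement reduces in that case to Kaiser's Theorem~1 and Corollary~2. I do not expect any genuine obstacle here: the entire content has already been extracted in the proof of Proposition \ref{prop2} from the symplectic covariance formula and the unitarity (and $\mathcal{S}$-continuity) of metaplectic operators, and the only point requiring a line of care is the bookkeeping of the rescaling $\hbar\leftrightarrow 1/2\pi$, which is exactly what Propositions \ref{Propres1} and \ref{Propres2} were designed to handle.
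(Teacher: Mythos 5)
Your proposal is correct and follows exactly the route the paper takes: the corollary is stated as an immediate application of Proposition \ref{prop2} (specialized to $\hbar=1/2\pi$, where the $\hbar$-frame notion coincides with the usual one), and the paper gives no further argument beyond that citation. Your additional remarks on the rescaling bookkeeping and the fractional Fourier transform connection to Kaiser's results match the surrounding discussion in the paper.
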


\section{Symplectic Deformations of Gabor Frames\label{secsymp1}}

The symplectic covariance property of Gabor frames studied above can be
interpreted as a first result on Hamiltonian deformations of frames because,
as we will see, every symplectic matrix is the value of the flow (at some time
$t$) of a Hamiltonian function which is a homogeneous quadratic polynomial
(with time-depending coefficients) in the coordinates $x_{j},p_{k} $. We will
in fact extend this result to deformations by affine flows corresponding to
the case\ where the Hamiltonian is an arbitrary quadratic function of these coordinates.

\subsection{The linear case}

The first example in subsection \ref{subsecex} (the fractional Fourier
transform) can be interpreted as a statement about continuous deformations of
Gabor frames. For instance, assume that $S_{t}=e^{tX}$, $X$ in the Lie algebra
$\mathfrak{sp}(n)$ of the symplectic group $\operatorname*{Sp}(n)$ (it is the
algebra of all $2n\times2n$ matrices $X$ such that $XJ+JX^{T}=0$; when $n=1$
it reduces to the condition $\operatorname*{Tr}X=0$; see \cite{Folland,Birk}).
It is then easy to check that $S_{t}$ can be identified with the flow
determined by the Hamilton equations $\dot{z}=J\partial_{z}H$ for the
function
\begin{equation}
H(z)=-\tfrac{1}{2}z^{T}(JX)z \label{hamq2}%
\end{equation}
that is
\begin{equation}
\frac{\mathrm{d}}{\mathrm{d}t}S_{t}=XS_{t}. \label{xs1}%
\end{equation}
A fundamental observation is now that to the path of symplectic matrices
$t\longmapsto S_{t}$, $0\leq t\leq1$ corresponds a unique path $t\longmapsto
\widehat{S}_{t}$, $0\leq t\leq1$, of metaplectic operators such that
$\widehat{S}_{0}=I$ and $\widehat{S}_{1}=\widehat{S}$ (see Appendix A). This
path satisfies the operator Schr\"{o}dinger equation%
\begin{equation}
i\hbar\frac{\mathrm{d}}{\mathrm{d}t}\widehat{S}_{t}=\widehat{H}\widehat{S}_{t}
\label{schrop}%
\end{equation}
where $\widehat{H}$ \ is the Weyl quantization of the function $H$. Collecting
these facts, one sees that $\mathcal{G}(\widehat{S}\phi_{0}^{\hbar}%
,S\Lambda_{\alpha\beta})\mathcal{\ }$is obtained from the initial Gabor frame
$\mathcal{G}(\phi_{0}^{\hbar},\Lambda_{\alpha\beta}) $ by a smooth deformation%
\begin{equation}
t\longmapsto\mathcal{G}(\widehat{S}_{t}\phi_{0}^{\hbar},S_{t}\Lambda
_{\alpha\beta})\text{ \ , \ }0\leq t\leq1. \label{def1}%
\end{equation}
More generally, let $S$ be an arbitrary element of the symplectic group
$\operatorname*{Sp}(n)$. The latter is connected so there exists a $C^{1}$
path (in fact infinitely many) $t\longmapsto S_{t}$, $0\leq t\leq1$, joining
the identity to $S$ in $\operatorname*{Sp}(n)$. An essential result,
generalizing the observations above, is the following:

\begin{proposition}
\label{proph}Let $t\longmapsto S_{t}$, $0\leq t\leq1$, be a path in
$\operatorname*{Sp}(n)$ such that $S_{0}=I$ and $S_{1}=S$. There exists a
Hamiltonian function $H=H(z,t)$ such that $S_{t}$ is the phase flow determined
by the Hamilton equations $\dot{z}=J\partial_{z}H$. Writing
\begin{equation}
S_{t}=%
\begin{pmatrix}
A_{t} & B_{t}\\
C_{t} & D_{t}%
\end{pmatrix}
\label{st1}%
\end{equation}
the Hamiltonian function is the quadratic form%
\begin{multline}
H(z,t)=\tfrac{1}{2}(\dot{D}_{t}C_{t}^{T}-\dot{C}_{t}D_{t}^{T})x^{2}%
\label{hamzi}\\
+(\dot{D}_{t}A_{t}^{T}-\dot{C}_{t}B_{t}^{T})p\cdot x+\tfrac{1}{2}(\dot{B}%
_{t}A_{t}^{T}-\dot{A}_{t}B_{t}^{T})p^{2}%
\end{multline}
where $\dot{A}_{t}=\mathrm{d}A_{t}/\mathrm{d}t$, etc.
\end{proposition}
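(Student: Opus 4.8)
The plan is to recover the Hamiltonian from its phase flow by differentiating. Given the path $t\longmapsto S_{t}$ in $\operatorname*{Sp}(n)$ with $S_{0}=I$ and $S_{1}=S$, I would introduce the time-dependent matrix
\[
X_{t}=\dot{S}_{t}S_{t}^{-1},\qquad\text{so that }\dot{S}_{t}=X_{t}S_{t}.
\]
The central point is that $X_{t}\in\mathfrak{sp}(n)$ for every $t$. To see this, differentiate the symplecticity relation $S_{t}^{T}JS_{t}=J$ to get $\dot{S}_{t}^{T}JS_{t}+S_{t}^{T}J\dot{S}_{t}=0$; substituting $\dot{S}_{t}=X_{t}S_{t}$ and factoring out $S_{t}^{T}$ on the left and $S_{t}$ on the right gives $X_{t}^{T}J+JX_{t}=0$, which is precisely the defining condition of $\mathfrak{sp}(n)$ (equivalently, $JX_{t}$ is a symmetric matrix).

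Next I would set $H(z,t)=-\tfrac{1}{2}z^{T}(JX_{t})z$. Because $JX_{t}$ is symmetric, $\partial_{z}H(z,t)=-(JX_{t})z$, so the Hamilton equations $\dot{z}=J\partial_{z}H$ reduce to the linear non-autonomous system $\dot{z}=-J(JX_{t})z=X_{t}z$. By construction $t\longmapsto S_{t}z_{0}$ solves this system with initial value $z_{0}$ (since $\dot{S}_{t}=X_{t}S_{t}$ and $S_{0}=I$), so by uniqueness of solutions of linear equations the phase flow of $H$ is exactly $(S_{t})_{0\leq t\leq1}$; linearity also guarantees that this flow is defined on all of $[0,1]$, so no escape-in-finite-time issue has to be ruled out. (If $S_{t}$ is merely $C^{1}$ then $X_{t}$ is only continuous in $t$ and the flow is read in the Carath\'eodory sense; for a $C^{2}$ path $H$ is a genuine smooth quadratic Hamiltonian.)

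It remains to put $H$ into the block form (\ref{hamzi}). Since $S_{t}$ is symplectic it has the explicit inverse $S_{t}^{-1}=\begin{pmatrix}D_{t}^{T}&-B_{t}^{T}\\-C_{t}^{T}&A_{t}^{T}\end{pmatrix}$; multiplying out $X_{t}=\dot{S}_{t}S_{t}^{-1}$ and then $JX_{t}$ in $n\times n$ blocks and expanding $-\tfrac{1}{2}z^{T}(JX_{t})z$ with $z=(x,p)$ produces the three quadratic terms. The identities $A_{t}B_{t}^{T}=B_{t}A_{t}^{T}$, $C_{t}D_{t}^{T}=D_{t}C_{t}^{T}$ and $A_{t}D_{t}^{T}-B_{t}C_{t}^{T}=I$, which follow from $S_{t}JS_{t}^{T}=J$, are used both to reconfirm that the two off-diagonal blocks of $JX_{t}$ are transposes of each other and to combine the two resulting mixed terms into the single coefficient of $p\cdot x$, yielding (\ref{hamzi}).

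The only steps needing genuine care are the verification that $X_{t}\in\mathfrak{sp}(n)$ — equivalently that $JX_{t}$ is symmetric, which is exactly what makes $H$ a bona fide quadratic form whose Hamilton equations close up to $\dot{z}=X_{t}z$ — and the sign-and-transpose bookkeeping in passing from $X_{t}=\dot{S}_{t}S_{t}^{-1}$ to the coefficients of $x^{2}$, $p\cdot x$, $p^{2}$ in (\ref{hamzi}), where the symplecticity relations among the blocks $A_{t},B_{t},C_{t},D_{t}$ must be invoked. Everything else is routine linear algebra.
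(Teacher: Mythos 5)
Your argument is the same as the paper's: the paper differentiates $S_{t}^{T}JS_{t}=J$ to conclude that $M_{t}=J\dot{S}_{t}S_{t}^{-1}$ is symmetric, sets $X_{t}=-JM_{t}=\dot{S}_{t}S_{t}^{-1}$ and $H(z,t)=-\tfrac{1}{2}z^{T}(JX_{t})z$ --- exactly your construction --- and, like you, leaves the final block computation to the reader. One caution on that unexpanded step: carrying it through with $S_{t}^{-1}=\begin{pmatrix}D_{t}^{T}&-B_{t}^{T}\\-C_{t}^{T}&A_{t}^{T}\end{pmatrix}$ gives the cross term $(\dot{A}_{t}D_{t}^{T}-\dot{B}_{t}C_{t}^{T})x\cdot p=-(\dot{D}_{t}A_{t}^{T}-\dot{C}_{t}B_{t}^{T})p\cdot x$ (test on $S_{t}=\operatorname{diag}(e^{t},e^{-t})$, whose Hamiltonian is $+p\cdot x$ while (\ref{hamzi}) as printed returns $-p\cdot x$), so you should verify the sign of the middle term rather than assert that the expansion ``yields (\ref{hamzi})'' verbatim.
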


\begin{proof}
The matrices $S_{t}$ being symplectic we have $S_{t}^{T}JS_{t}=J$.
Differentiating both side of this equality with respect to $t$ and setting we
get $\dot{S}_{t}^{T}JS_{t}+S_{t}^{T}J\dot{S}_{t}=0$ or, equivalently,
\[
J\dot{S}_{t}S_{t}^{-1}=-(S_{t}^{T})^{-1}\dot{S}_{t}^{T}J=(J\dot{S}_{t}%
S_{t}^{-1})^{T}.
\]
This equality can be rewritten $J\dot{S}_{t}S_{t}^{-1}=(J\dot{S}_{t}S_{t}%
^{-1})^{T}$ hence the matrix $J\dot{S}_{t}S_{t}^{-1}$ is symmetric. Set
$J\dot{S}_{t}S_{t}^{-1}=M_{t}=M_{t}^{T}$; then%
\begin{equation}
\dot{S}_{t}=X_{t}S_{t}\text{ \ , \ }X_{t}=-JM_{t} \label{xs2}%
\end{equation}
(it reduces to formula (\ref{xs1}) when $M_{t}$ is time-independent). Define
now
\[
H(z,t)=-\tfrac{1}{2}z^{T}(JX_{t})z;
\]
using (\ref{xs2}) one verifies that the phase flow determined by $H$ consists
precisely of the symplectic matrices $S_{t}$ and that $H$ is given by formula
(\ref{hamzi}).
\end{proof}

\begin{remark}
Formula (\ref{hamzi}) also follows from the more general formula (\ref{hzt})
about Hamiltonian isotopies in Proposition (\ref{th2}) below.
\end{remark}

Exactly as above, to the path of symplectic matrices $t\longmapsto S_{t}$
corresponds a path $t\longmapsto\widehat{S}_{t}$ of metaplectic operators such
that $\widehat{S}_{0}=I$ and $\widehat{S}_{1}=\widehat{S}$ satisfying the
Schr\"{o}dinger equation (\ref{schrop}). Thus, it makes sense to consider
smooth deformations (\ref{def1}) for arbitrary symplectic paths. This
situation will be generalized to the nonlinear case in a moment.

\subsection{Translations of Gabor systems\label{sectrans}}

A particular simple example of transformation is that of the translations
$T(z_{0}):z\longmapsto z+z_{0}$ in $\mathbb{R}^{2n}$. On the operator level
they correspond to the Heisenberg--Weyl operators $\widehat{T}^{\hbar}(z_{0}%
)$. This correspondence is very easy to understand in terms of
\textquotedblleft quantization\textquotedblright: for fixed\ $z_{0}$ consider
the Hamiltonian function
\[
H(z)=\sigma(z,z_{0})=p\cdot x_{0}-p_{0}\cdot x.
\]
The corresponding Hamilton equations are just $\dot{x}=x_{0}$, $\dot{p}=p_{0}
$ whose solutions are $x(t)=x(0)+tx_{0}$ and $p(t)=p(0)+tp_{0}$, that is
$z(t)=T(tz_{0})z(0)$. Let now%
\[
\widehat{H}=\sigma(\widehat{z},z_{0})=(-i\hbar\partial_{x})\cdot x_{0}%
-p_{0}\cdot x.
\]
be the \textquotedblleft quantization\textquotedblright\ of $H$, and consider
the Schr\"{o}dinger equation%
\[
i\hslash\partial_{t}\phi=\sigma(\widehat{z},z_{0})\phi.
\]
Its solution is given by
\[
\phi(x,t)=e^{-t\sigma(\widehat{z},z_{0})/\hbar}\phi(x,0)=\widehat{T}^{\hbar
}(tz_{0})\phi(x,0)
\]
(the second equality can be verified by a direct calculation, or using the
Campbell--Hausdorff formula \cite{Folland,Birk,Birkbis,Littlejohn}).

Translations act in a particularly simple way on Gabor frames; we write
$T(z_{1})\Lambda=\Lambda+z_{1}$.

\begin{proposition}
\label{propzozun}Let $z_{0},z_{1}\in\mathbb{R}^{2n}$. A Gabor system
$\mathcal{G}(\phi,\Lambda)$ is a $\hbar$-frame if and only if $\mathcal{G}%
(\widehat{T}^{\hbar}(z_{0})\phi,T(z_{1})\Lambda)$ is a $\hbar$-frame; the
frame bounds are in this case the same for all values of $z_{0},z_{1}$.
\end{proposition}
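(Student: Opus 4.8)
The plan is to reduce the statement to the composition law of the Heisenberg operators together with their unitarity. Recall that these operators satisfy the Weyl relations
\[
\widehat{T}^{\hbar}(z)\widehat{T}^{\hbar}(z') = e^{i\sigma(z,z')/2\hbar}\,\widehat{T}^{\hbar}(z+z'),
\qquad
\widehat{T}^{\hbar}(z)\widehat{T}^{\hbar}(z') = e^{i\sigma(z,z')/\hbar}\,\widehat{T}^{\hbar}(z')\widehat{T}^{\hbar}(z),
\]
which one checks immediately from the explicit formula (\ref{heiwe}) (or from the Campbell--Hausdorff argument already used in Subsection \ref{sectrans}). The only features needed are that the phase factors are unimodular, that each $\widehat{T}^{\hbar}(z)$ is unitary on $L^{2}(\mathbb{R}^{n})$, and that $(\widehat{T}^{\hbar}(z))^{*}=\widehat{T}^{\hbar}(-z)$.

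With these in hand I would fix $\psi\in L^{2}(\mathbb{R}^{n})$ and expand the middle term of the frame inequality (\ref{frame1}) for $\mathcal{G}(\widehat{T}^{\hbar}(z_{0})\phi,T(z_{1})\Lambda)$. Writing a generic element of $T(z_{1})\Lambda$ as $z_{1}+w$ with $w\in\Lambda$, the composition relations give a real number $\theta=\theta(w,z_{0},z_{1})$ with
\[
\widehat{T}^{\hbar}(z_{1}+w)\widehat{T}^{\hbar}(z_{0})
= e^{i\theta}\,\widehat{T}^{\hbar}(z_{1})\widehat{T}^{\hbar}(z_{0})\widehat{T}^{\hbar}(w),
\]
the point being only that $\bigl|e^{i\theta}\bigr|=1$. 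Since $U:=\widehat{T}^{\hbar}(z_{1})\widehat{T}^{\hbar}(z_{0})$ is unitary, putting $\psi' = U^{*}\psi$ we get $\bigl|(\psi\mid\widehat{T}^{\hbar}(z_{1}+w)\widehat{T}^{\hbar}(z_{0})\phi)\bigr|^{2} = \bigl|(\psi'\mid\widehat{T}^{\hbar}(w)\phi)\bigr|^{2}$, hence
\[
\sum_{z\in T(z_{1})\Lambda}\bigl|(\psi\mid\widehat{T}^{\hbar}(z)\widehat{T}^{\hbar}(z_{0})\phi)\bigr|^{2}
= \sum_{w\in\Lambda}\bigl|(\psi'\mid\widehat{T}^{\hbar}(w)\phi)\bigr|^{2},
\qquad \|\psi'\| = \|\psi\|.
\]
From here the equivalence is immediate: if $\mathcal{G}(\phi,\Lambda)$ is a $\hbar$-frame with bounds $a,b$, applying (\ref{frame1}) to $\psi'$ and using $\|\psi'\|=\|\psi\|$ gives $a\|\psi\|^{2}\le\sum_{z\in T(z_{1})\Lambda}|(\psi\mid\widehat{T}^{\hbar}(z)\widehat{T}^{\hbar}(z_{0})\phi)|^{2}\le b\|\psi\|^{2}$ for every $\psi$, so $\mathcal{G}(\widehat{T}^{\hbar}(z_{0})\phi,T(z_{1})\Lambda)$ is a $\hbar$-frame with the same bounds. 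The converse follows by reading the same identity backwards, since $\psi\mapsto\psi'$ is a bijection of $L^{2}(\mathbb{R}^{n})$ with inverse $\psi'\mapsto U\psi'$; in particular the bounds are independent of $z_{0},z_{1}$ (and tightness is preserved, being the case $a=b$).

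I do not expect a genuine obstacle here: the proposition is essentially a repackaging of the group law of the Heisenberg operators, and the only point requiring care is the bookkeeping of the phase factors, i.e. verifying that they are unimodular and therefore invisible under $|\cdot|^{2}$, together with the observation that the auxiliary map $\psi\mapsto\psi'$ is a unitary bijection so that the frame bounds transfer untouched in both directions. One could alternatively split the argument into the two special cases $z_{1}=0$ (modulating the window, the metaplectic-free analogue of Proposition \ref{prop2}) and $z_{0}=0$ (translating the lattice) and then compose; the one-step computation above is simply the merger of these two.
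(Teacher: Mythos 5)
Your proof is correct and rests on exactly the same ingredients as the paper's: the Weyl relations (\ref{hwt1})--(\ref{hwt2}) and the unitarity of the Heisenberg operators, used to absorb a unimodular phase and shift a unitary onto $\psi$. The only difference is presentational -- the paper treats the cases $z_{1}=0$ and $z_{0}=0$ separately and then composes, while you merge them into the single identity $\widehat{T}^{\hbar}(z_{1}+w)\widehat{T}^{\hbar}(z_{0})=e^{i\theta}\,\widehat{T}^{\hbar}(z_{1})\widehat{T}^{\hbar}(z_{0})\widehat{T}^{\hbar}(w)$, which is precisely the computation behind the paper's closing remark that the general case ``immediately follows.''
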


\begin{proof}
We will need the following well-known \cite{Folland,Birk,Birkbis,Littlejohn}
properties of the Heisenberg--Weyl operators:%
\begin{align}
\widehat{T}^{\hbar}(z)\widehat{T}^{\hbar}(z^{\prime})  &  =e^{i\sigma
(z,z^{\prime})/\hbar}\widehat{T}^{\hbar}(z^{\prime})\widehat{T}^{\hbar
}(z)\label{hwt1}\\
\widehat{T}^{\hbar}(z+z^{\prime})  &  =e^{-i\sigma(z,z^{\prime})/2\hbar
}\widehat{T}^{\hbar}(z)\widehat{T}^{\hbar}(z^{\prime}). \label{hwt2}%
\end{align}
Assume first $z_{1}=0$ and let us prove that $\mathcal{G}(\widehat{T}^{\hbar
}(z_{0})\phi,\Lambda)$ is a $\hbar$-frame if and only $\mathcal{G}%
(\phi,\Lambda)$ is. We have, using formula (\ref{hwt1}) and the unitarity of
$\widehat{T}^{\hbar}(z_{0})$,%
\begin{align*}
\sum_{z\in\Lambda}|(\psi|\widehat{T}^{\hbar}(z)\widehat{T}^{\hbar}(z_{0}%
)\phi)|^{2}  &  =\sum_{z\in\Lambda}|(\psi|e^{i\sigma(z,z_{0})/\hbar}%
\widehat{T}^{\hbar}(z_{0})\widehat{T}^{\hbar}(z)\phi)|\\
&  =\sum_{z\in\Lambda}|(\psi|\widehat{T}^{\hbar}(z_{0})\widehat{T}^{\hbar
}(z)\phi)|\\
&  =\sum_{z\in\Lambda}|(\widehat{T}^{\hbar}(-z_{0})\psi|\widehat{T}^{\hbar
}(z)\phi)|;
\end{align*}
it follows that the inequality%
\[
a||\psi||^{2}\leq\sum_{z\in\Lambda}|(\psi|\widehat{T}^{\hbar}(z)\widehat
{T}^{\hbar}(z_{0})\phi)|^{2}\leq b||\psi||^{2}%
\]
is equivalent to%
\[
a||\psi||^{2}\leq\sum_{z\in\Lambda}|(\psi|\widehat{T}^{\hbar}(z)\phi)|^{2}\leq
b||\psi||^{2}%
\]
hence our claim in the case $z_{1}=0$. We next assume that $z_{0}=0$; we have,
using this time formula (\ref{hwt2}),
\begin{align*}
\sum_{z\in T(z_{1})\Lambda}|(\psi|\widehat{T}^{\hbar}(z)\phi)|^{2}  &
=\sum_{z\in\Lambda}|(\psi|\widehat{T}^{\hbar}(z+z_{1})\phi)|^{2}\\
&  =\sum_{z\in\Lambda}|(\psi|\widehat{T}^{\hbar}(z_{1})\widehat{T}^{\hbar
}(z)\phi)|^{2}\\
&  =\sum_{z\in\Lambda}|(\widehat{T}^{\hbar}(-z_{1})\psi|\widehat{T}^{\hbar
}(z)\phi)|^{2}%
\end{align*}
and one concludes as in the case $z_{1}=0$. The case of arbitrary $z_{0}%
,z_{1}$ immediately follows.
\end{proof}

Identifying the group of translations with $\mathbb{R}^{2n}$ the inhomogeneous
(or affine) symplectic group $\operatorname*{ISp}(n)$ \cite{burdet,Folland} is
the semi-direct product $\operatorname*{Sp}(n)\ltimes\mathbb{R}^{2n}$; the
group law is given by%
\[
(S,z)(S^{\prime},z^{\prime})=(SS^{\prime},z+Sz^{\prime}).
\]

Using the conjugation relation%
\begin{equation}
S^{-1}T(z_{0})S=T(S^{-1}z_{0}) \label{sts1}%
\end{equation}
one checks that $\operatorname*{ISp}(n)$ is isomorphic to the group of all
affine transformations of $\mathbb{R}^{2n}$ of the type $ST(z_{0})$ (or
$T(z_{0})S$) where $S\in\operatorname*{Sp}(n)$.

The group $\operatorname*{ISp}(n)$ appears in a natural way when one considers
Hamiltonians of the type%
\begin{equation}
H(z,t)=\tfrac{1}{2}M(t)z\cdot z+m(t)\cdot z \label{hmm}%
\end{equation}
where $M(t)$ is symmetric and $m(t)$ is a vector. In fact, the phase flow
determined by the Hamilton equations for (\ref{hmm}) consists of elements of
$\operatorname*{ISp}(n)$. Assume for instance that the coefficients $M$ and
$m$ are time-independent; the solution of Hamilton's equations $\dot
{z}=JMz+Jm$ is%
\begin{equation}
z_{t}=e^{tJM}z_{0}+(JM)^{-1}(e^{tJM}-I)JM \label{zt1}%
\end{equation}
provided that $\det M\neq0$. When $\det M=0$ the solution (\ref{zt1}) is still
formally valid and depends on the nilpotency degree of $X=JM$. Since
$X=JM\in\mathfrak{sp}(n)$ we have $S_{t}=e^{tX}\in\operatorname*{Sp}(n)$;
setting $\xi_{t}=X^{-1}(e^{tX}-I)u$ the flow $(f_{t}^{H})$ is thus given by%
\[
f_{t}^{H}=T(\xi_{t})S_{t}\in\operatorname*{ISp}(n).
\]

The metaplectic group $\operatorname*{Mp}(n)$ is a unitary representation of
the double cover $\operatorname*{Sp}_{2}(n)$ of $\operatorname*{Sp}(n)$ (see
Appendix A). There is an analogue when $\operatorname*{Sp}(n)$ is replaced
with $\operatorname*{ISp}(n)$: it is the Weyl-metaplectic group
$\operatorname*{WMp}(n)$, which consists of all products $\widehat{T}%
(z_{0})\widehat{S}$; notice that formula (\ref{symco}), which we can rewrite%
\begin{equation}
\widehat{S}^{-1}\widehat{T}^{\hbar}(z)\widehat{S}=\widehat{T}^{\hbar}%
(S^{-1}z).
\end{equation}
is the operator version of formula (\ref{sts1}) above.

\section{The Group $\operatorname*{Ham}(n)$\label{sechamn}}

In this section we review the basics of the modern theory of Hamiltonian
mechanics from the symplectic point of view; for details we refer to
\cite{ekho90,HZ,Polter}; we have also given elementary accounts in
\cite{Birk,Birkbis}.

\subsection{Hamiltonian flows: properties}

A Hamiltonian system (\ref{ham}) can be written in compact form as%
\begin{equation}
\dot{z}=J\partial_{z}H(z,t) \label{ham1}%
\end{equation}
where $J$ is the standard symplectic matrix. The Hamiltonian function $H$ is
assumed to be twice continuously differentiable in $z$, and continuous in $t$.
We denote by $f_{t}^{H}$ the mapping $\mathbb{R}^{2n}\longrightarrow
\mathbb{R}^{2n}$ which to an initial condition $z_{0}$ associates the value
$z=f_{t}^{H}(z_{0})$ of the solution to (\ref{ham1}) at time $t$. The family
$(f_{t}^{H})_{t}$ of all these mappings is called the phase flow determined by
the Hamiltonian system (\ref{ham1}).

It is often useful to replace the notion of flow as defined as above by that
of \emph{time-dependent flow} $(f_{t,t^{\prime}}^{H})$: $f_{t,t^{\prime}}^{H}$
is the function such that $f_{t,t^{\prime}}^{H}(z^{\prime})$ is the solution
of Hamilton's equations with $z(t^{\prime})=z^{\prime}$. Obviously
\begin{equation}
f_{t,t^{\prime}}^{H}=f_{t,0}^{H}\left(  f_{t^{\prime},0}^{H}\right)
^{-1}=f_{t}^{H}\left(  f_{t^{\prime}}^{H}\right)  ^{-1} \label{ftt'}%
\end{equation}
\ and the $f_{t,t^{\prime}}^{H}$ satisfy the groupoid property
\begin{equation}
f_{t,t^{\prime}}^{H}f_{t^{\prime},t^{\prime}}^{H}=f_{t,t^{\prime\prime}}%
^{H}\text{ \ , \ }f_{t,t}^{H}=I_{\mathrm{d}} \label{flow2}%
\end{equation}
for all $t$, $t^{\prime}$ and $t^{\prime\prime}$. Notice that it follows in
particular that $(f_{t,t^{\prime}}^{H})^{-1}=f_{t^{\prime},t}^{H}$.

An essential property which links Hamiltonian dynamics to symplectic geometry
is that each mapping $f_{t}^{H}$ is a diffeomorphism such that%
\begin{equation}
\left[  Df_{t}^{H}(z)\right]  ^{T}JDf_{t}^{H}(z)=Df_{t}^{H}(z)J\left[
Df_{t}^{H}(z)\right]  ^{T}=J. \label{jac1}%
\end{equation}
Here $Df_{t}^{H}(z)$ is the Jacobian matrix of the diffeomorphism $f_{t}^{H}$
calculated at the point $z=(x,p)$:%
\[
Df_{t}^{H}(z)=\frac{\partial z_{t}}{\partial z}=\frac{\partial(x_{t},p_{t}%
)}{\partial(x,p)}%
\]
if $z_{t}=f_{t}^{H}(z)$. The equality (\ref{jac1}) means that the matrix
$Df_{t}^{H}(z)$ is symplectic: $Df_{t}^{H}(z)\in\operatorname*{Sp}(n)$ for
every $z $ and $t$. Any diffeomorphism $f$ of phase space $\mathbb{R}^{2n}$
satisfying the condition%
\begin{equation}
Df(z)^{T}JDf(z)=Df(z)JDf(z)^{T}=J \label{symplecto}%
\end{equation}
is called a \emph{symplectomorphism}. Formula (\ref{jac1}) thus says that
Hamiltonian flows consist of symplectomorphisms, which is a well-known
property from classical mechanics \cite{Arnold,HZ,Polter}.

A remarkable fact is that composition and inversion of Hamiltonian flows also
yield Hamiltonian flows:

\begin{proposition}
\label{th1}Let $(f_{t}^{H})$ and $(f_{t}^{K})$ be the phase flows determined
by two Hamiltonian functions $H=H(z,t)$ and $K=K(z,t)$. We have
\begin{align}
f_{t}^{H}f_{t}^{K}  &  =f_{t}^{H\#K}\text{ \ \ with \ \ }%
H\#K(z,t)=H(z,t)+K((f_{t}^{H})^{-1}(z),t).\label{ch1}\\
(f_{t}^{H})^{-1}  &  =f_{t}^{\bar{H}}\text{ \ \ with \ \ }\bar{H}%
(z,t)=-H(f_{t}^{H}(z),t). \label{ch2}%
\end{align}

\end{proposition}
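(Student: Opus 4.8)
The plan is to verify both identities by differentiating the candidate flows and checking they satisfy the correct Hamilton equations with the correct initial data, appealing to uniqueness of solutions of ODEs. It is cleanest to treat the composition formula $(\ref{ch1})$ first and then deduce $(\ref{ch2})$ as a special case (or prove $(\ref{ch2})$ directly and then bootstrap to $(\ref{ch1})$; I will do the former). Set $g_t = f_t^H f_t^K$ and fix an initial point $z_0$; write $z(t) = g_t(z_0)$, so $z(t) = f_t^H(w(t))$ with $w(t) = f_t^K(z_0)$. Differentiating by the chain rule, $\dot z(t) = (\partial_t f_t^H)(w(t)) + Df_t^H(w(t))\,\dot w(t)$. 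The first term is $J\partial_z H(f_t^H(w(t)),t) = J\partial_z H(z(t),t)$ by definition of the flow, and $\dot w(t) = J\partial_z K(w(t),t)$. So I need the second term, $Df_t^H(w(t))\, J\, \partial_z K(w(t),t)$, to equal $J\,\partial_z\big(K((f_t^H)^{-1}(\cdot),t)\big)$ evaluated at $z(t)$.

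The key computation is therefore the gradient of $z \mapsto K((f_t^H)^{-1}(z),t)$. By the chain rule this gradient is $\big[D(f_t^H)^{-1}(z)\big]^T \partial_z K\big((f_t^H)^{-1}(z),t\big)$. Evaluating at $z = z(t) = f_t^H(w(t))$, so that $(f_t^H)^{-1}(z(t)) = w(t)$, and using that $D(f_t^H)^{-1}(z(t)) = \big[Df_t^H(w(t))\big]^{-1}$, the gradient becomes $\big[Df_t^H(w(t))^{-1}\big]^T \partial_z K(w(t),t)$. Hence the right-hand side is $J\big[Df_t^H(w(t))^{-1}\big]^T \partial_z K(w(t),t)$, and matching with the second term above reduces the whole identity to the linear-algebra fact $Df_t^H(w)\,J = J\,\big[Df_t^H(w)^{-1}\big]^T$, i.e.\ $Df_t^H(w)\,J\,Df_t^H(w)^T = J$ — which is exactly the symplecticity $(\ref{jac1})$ of the Hamiltonian flow established above. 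So the second terms agree, $\dot z(t) = J\partial_z(H\#K)(z(t),t)$, and since $g_0 = f_0^H f_0^K = \mathrm{Id}$, uniqueness for the Hamilton equations of $H\#K$ gives $g_t = f_t^{H\#K}$.

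For $(\ref{ch2})$, apply $(\ref{ch1})$ with $K$ chosen so that $f_t^H f_t^K = \mathrm{Id}$, i.e.\ $f_t^K = (f_t^H)^{-1}$; then $H\#K$ must be the zero Hamiltonian (whose flow is the identity), so $0 = H(z,t) + K((f_t^H)^{-1}(z),t)$, giving $K(z,t) = -H(f_t^H(z),t)$, which is precisely $\bar H$. Alternatively one verifies $(\ref{ch2})$ from scratch: with $y(t) = (f_t^H)^{-1}(z_0)$, differentiate the relation $f_t^H(y(t)) = z_0$ to get $(\partial_t f_t^H)(y(t)) + Df_t^H(y(t))\dot y(t) = 0$, so $\dot y(t) = -\big[Df_t^H(y(t))\big]^{-1} J\,\partial_z H(f_t^H(y(t)),t)$, and then the same symplectic identity $\big[Df_t^H\big]^{-1}J = J\big[Df_t^H\big]^T$ together with the chain-rule formula $\partial_z\big(H(f_t^H(z),t)\big) = \big[Df_t^H(z)\big]^T \partial_z H(f_t^H(z),t)$ turns the right-hand side into $J\,\partial_z\bar H(y(t),t)$, as desired; since $y(0) = z_0$, uniqueness finishes it. I expect the only genuine subtlety to be bookkeeping the chain rule for the inverse map and invoking $(\ref{jac1})$ in exactly the right transposed form; everything else is a routine ODE-uniqueness argument, with the mild standing assumption (implicit in the section) that $H$ and $K$ are regular enough for global existence and uniqueness of the relevant flows.
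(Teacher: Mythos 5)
Your proof is correct and is essentially the argument the paper has in mind: the paper itself only cites the references \cite{Birk,HZ,Polter} and describes the method as resting on ``the transformation properties of the Hamiltonian fields $X_H=J\partial_z H$ under diffeomorphisms,'' and your chain-rule computation combined with the symplecticity identity $Df_t^H\,J\,(Df_t^H)^T=J$ from (\ref{jac1}) is precisely that transformation law ($g^*X_K = X_{K\circ g}$ for symplectic $g$) written out explicitly, followed by the standard ODE-uniqueness step. One small caveat: your first derivation of (\ref{ch2}) from (\ref{ch1}) presupposes the existence of a Hamiltonian $K$ generating $(f_t^H)^{-1}$ (which is the thing to be proved) and only pins down $H\#K$ up to an additive function of $t$, so it should be regarded as heuristic; your alternative direct verification of (\ref{ch2}) is complete and should be taken as the actual proof of that part.
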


\begin{proof}
It is based on the transformation properties of the Hamiltonian fields
$X_{H}=J\partial_{z}H$ under diffeomorphisms; see \cite{Birk,HZ,Polter} for
detailed proofs.
\end{proof}

We notice that even if $H$ and $K$ are time-independent Hamiltonians, then
$H\#K$ and $\bar{H}$ are time-dependent.

\subsection{Hamiltonian isotopies}

Formula (\ref{symplecto}) above shows, using the chain rule, that the
symplectomorphisms of $\mathbb{R}^{2n}$ form a group, which we will denote by
$\operatorname*{Symp}(n)$.

Let us now focus on the Hamiltonian case. We will call a symplectomorphism $f
$ such that $f=f_{t}^{H}$ for some Hamiltonian function $H$ and time $t=1$ a
\emph{Hamiltonian symplectomorphism}. The choice of time $t=1$ in this
definition is of course arbitrary, and can be replaced with any other choice
$t=a$: we have $f=f_{t_{a}}^{H_{a}}$ where $H_{a}(z,t)=aH(z,at).$

Hamiltonian symplectomorphisms form a subgroup $\operatorname*{Ham}(n)$ of the
group $\operatorname*{Symp}(n)$ of all symplectomorphisms; it is in fact a
normal subgroup of $\operatorname*{Symp}(n)$ as follows from the formula%
\[
g^{-1}f_{t}^{H}g=f_{t}^{H\circ g}%
\]
valid for every symplectomorphism $g$ of $\mathbb{R}^{2n}$
\cite{HZ,Birk,Birkbis}. That $\operatorname*{Ham}(n)$ really is a group
follows from the two formulas (\ref{ch1}) and (\ref{ch2}) in Proposition
\ref{th1} above.

The following result is, in spite of its simplicity, a deep statement about
the structure of the group $\operatorname*{Ham}(n)$. It says that every
continuous path of Hamiltonian transformations passing through the identity is
itself the phase flow determined by a certain Hamiltonian function.

\begin{proposition}
\label{th2}Let $(f_{t})_{t}$ be a smooth one-parameter family of Hamiltonian
transformations such that $f_{0}=I_{\mathrm{d}}$. There exists a Hamiltonian
function $H=H(z,t)$ such that $f_{t}=f_{t}^{H}$. More precisely, $(f_{t})_{t}$
is the phase flow determined by the Hamiltonian function%
\begin{equation}
H(z,t)=-\int_{0}^{1}z^{T}J\left(  \dot{f}_{t}\circ f_{t}^{-1}\right)  (\lambda
z)\mathrm{d}\lambda\label{hzt}%
\end{equation}
where $\dot{f}_{t}=$\textrm{$d$}$f_{t}/\mathrm{d}t$.
\end{proposition}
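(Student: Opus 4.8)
The plan is to reconstruct the time-dependent Hamiltonian vector field from the path $(f_t)_t$ and then recover the generating Hamiltonian function by integrating along rays, exploiting the fact that, at each fixed $t$, the vector field in question is a Hamiltonian vector field (i.e. its contraction with $J$ is a closed, hence exact, $1$-form on $\mathbb{R}^{2n}$).

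First I would introduce the time-dependent vector field
\[
X_t(z)=\left(\dot{f}_t\circ f_t^{-1}\right)(z),
\]
so that by construction $\dot{f}_t = X_t\circ f_t$, i.e. $(f_t)_t$ is the flow of the (a priori merely time-dependent) vector field $X_t$. The key point is to show that each $X_t$ is a Hamiltonian vector field, $X_t = J\partial_z K_t$ for some function $K_t$. To see this, differentiate the symplectic condition. Since each $f_t$ is a symplectomorphism (being a Hamiltonian transformation, by the hypothesis and formula (\ref{jac1})), we have $Df_t(z)^T J\, Df_t(z)=J$ for all $z,t$. Differentiating in $t$ and using $Df_t$ invertible, exactly as in the proof of Proposition \ref{proph}, one finds that the matrix $J\,D X_t(z)$ is symmetric for every $z$ — equivalently, the $1$-form $\omega_t := \iota_{X_t}\sigma$ (in coordinates, the form with components $(JX_t)_j$) is closed. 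Since $\mathbb{R}^{2n}$ is simply connected, $\omega_t$ is exact, so there is a function $H(\cdot,t)$ with $\partial_z H(z,t) = -J X_t(z)$, i.e. $X_t = J\partial_z H(\cdot,t)$; smoothness in $t$ is inherited from smoothness of $(f_t)_t$.

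Next I would pin down the explicit formula by the standard Poincaré-lemma construction: a primitive of the closed $1$-form $\omega_t$ vanishing at the origin is
\[
H(z,t) = -\int_0^1 z^T J\, X_t(\lambda z)\,\mathrm{d}\lambda,
\]
which is precisely (\ref{hzt}) after substituting the definition of $X_t$. One checks directly by differentiating under the integral sign and using the symmetry of $J\,DX_t$ that $\partial_z H(z,t)=-JX_t(z)$, hence $J\partial_z H(z,t)=X_t(z)$ (using $J^2=-I$); therefore the Hamilton equations $\dot z = J\partial_z H(z,t)$ are exactly $\dot z = X_t(z)$, whose flow with initial condition $z$ at $t=0$ is $f_t(z)$ by definition of $X_t$ and $f_0=I_{\mathrm d}$. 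By uniqueness of solutions of ODEs this gives $f_t=f_t^H$, as claimed.

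The main obstacle is the closedness step — verifying that $J\,DX_t(z)$ is symmetric. Everything else (the ray integral, differentiation under the integral, matching with (\ref{hzt})) is routine. This symmetry is the infinitesimal shadow of the symplectic condition; the cleanest route is to differentiate $Df_t(z)^T J\, Df_t(z)=J$ in $t$ to get $\dot{(Df_t)}^T J\, Df_t + Df_t^T J\, \dot{(Df_t)}=0$, then rewrite $\dot{(Df_t)} = DX_t(f_t)\,Df_t$ (differentiate $\dot f_t = X_t\circ f_t$ in $z$), substitute, and cancel the invertible factors $Df_t$, $Df_t^T$ to conclude that $Df_t(z)^T\big(DX_t(f_t(z))^T J + J\, DX_t(f_t(z))\big)Df_t(z)=0$, hence $J\,DX_t(w) = -DX_t(w)^T J = (J\,DX_t(w))^T$ for every $w$ in the (full) image of $f_t$. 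This is the place where one genuinely uses that the $f_t$ are symplectomorphisms and not merely diffeomorphisms; it is the reason the statement is special to $\operatorname{Ham}(n)$.
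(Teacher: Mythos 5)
Your proof is correct. Note that the paper does not actually prove Proposition \ref{th2}; it defers to Wang \cite{Wang} and Banyaga \cite{Banyaga}. What you have written is the standard (and essentially Wang's) elementary argument, and it is the natural nonlinear extension of the paper's own proof of the linear case, Proposition \ref{proph}: there one differentiates $S_t^TJS_t=J$ to conclude that $J\dot S_tS_t^{-1}$ is symmetric and writes down the quadratic Hamiltonian $-\tfrac12 z^T(JX_t)z$; you differentiate $Df_t(z)^TJ\,Df_t(z)=J$ to conclude that $J\,DX_t(w)$ is symmetric for all $w$ (using surjectivity of $f_t$), i.e.\ that the $1$-form $-JX_t$ is closed, and then replace the explicit quadratic primitive by the Poincar\'e-lemma primitive along rays, $-\int_0^1 z^TJX_t(\lambda z)\,\mathrm{d}\lambda$, which indeed reduces to $-\tfrac12 z^TJX_tz$ when $X_t$ is linear. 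Your sign bookkeeping checks out ($\partial_zH=-JX_t$ gives $J\partial_zH=X_t$ since $J^2=-I$), and the verification $\partial_kH=\int_0^1\frac{\mathrm{d}}{\mathrm{d}\lambda}\bigl[\lambda\,(-JX_t)_k(\lambda z)\bigr]\mathrm{d}\lambda=(-JX_t)_k(z)$ uses exactly the symmetry you established. One small remark: your argument never uses that each $f_t$ individually lies in $\operatorname{Ham}(n)$, only that it is a symplectomorphism; so you have in fact proved the (slightly stronger) statement that every smooth symplectic isotopy of $\mathbb{R}^{2n}$ starting at the identity is a Hamiltonian flow --- which is precisely why, on $\mathbb{R}^{2n}$, no generality is lost, the point being that $H^1(\mathbb{R}^{2n})=0$ makes the closed form $-JX_t$ exact.
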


We refer to Wang \cite{Wang} who gives an elementary proof of formula
(\ref{hzt}). The result goes back to the seminal paper of Banyaga
\cite{Banyaga}, but the idea is already present in Arnold \cite{Arnold} (p.
269) who uses the apparatus of generating functions.

We will a call smooth path $(f_{t})_{t}$ in $\operatorname*{Ham}(n)$ joining
the identity to some element $f\in\operatorname*{Ham}(n)$ a \emph{Hamiltonian
isotopy.}

\subsection{Symplectic algorithms\label{secsympalg}}

Symplectic integrators are designed for the numerical solution of Hamilton's
equations; they are algorithms which preserve the symplectic character of
Hamiltonian flows. The literature on the topic is immense; a well-cited paper
is Channel and Scovel \cite{channel}. Among many recent contributions, a
highlight is the recent treatise \cite{kang} by Kang Feng, Mengzhao Qin; also
see the comprehensive paper by Xue-Shen Liu \textit{et al.} \cite{xue}, and
Marsden's online lecture notes \cite{Marsden} (Chapter 9).

Let $(f_{t}^{H})$ be a Hamiltonian flow; we assume first that $H$ is
time-independent so that we have the one-parameter group property $f_{t}%
^{H}f_{t^{\prime}}^{H}=f_{t+t^{\prime}}^{H}$. Choose an initial value $z_{0}$
at time $t=0$. A mapping $f_{\Delta t}$ on $\mathbb{R}^{2n}$ is an algorithm
with time step-size $\Delta t$ for $(f_{t}^{H})$ if we have%
\[
f_{\Delta t}^{H}(z)=f_{\Delta t}(z)+O(\Delta t^{k});
\]
the number $k$ (usually an integer $\geq1$) is called the order of the
algorithm. In the theory of Hamiltonian systems one requires that $f_{\Delta
t}$ be a symplectomorphism; $f_{\Delta t}$ is then called a symplectic
integrator. One of the basic properties one is interested in is convergence:
setting $\Delta t=t/N$ ($N$ an integer) when do we have $\lim_{N\rightarrow
\infty}(f_{t/N})^{N}(z)=f_{t}^{H}(z)$? One important requirement is stability,
\textit{i.e.} $(f_{t/N})^{N}(z)$ must remain close to $z$ for small $t$ (see
Chorin \textit{et al}. \cite{chorin}).

Here are two elementary examples of symplectic integrators. We assume that the
Hamiltonian $H$ has the physical form%
\[
H(x,p)=U(p)+V(x).
\]

\begin{itemize}
\item \textit{First order algorithm}. One defines $(x_{k+1},p_{k+1})=f_{\Delta
t}(x_{k},p_{k})$ by
\begin{align*}
x_{k+1}  &  =x_{k}+\partial_{p}U(p_{k}-\partial_{x}V(x_{k})\Delta t)\Delta t\\
p_{k+1}  &  =p_{k}-\partial_{x}V(x_{k})\Delta t.
\end{align*}

\item \textit{Second order algorithm. }Setting%
\[
x_{k}^{\prime}=x_{k}+\tfrac{1}{2}\partial_{p}U(p_{k})
\]
we take%
\begin{align*}
x_{k+1}  &  =x_{k}^{\prime}+\tfrac{1}{2}\partial_{p}U(p_{k})\\
p_{k+1}  &  =p_{k}-\partial_{x}V(x_{k}^{\prime})\Delta t.
\end{align*}

\end{itemize}

One can show, using Proposition \ref{th2} (Wang \cite{Wang}), that both
schemes are not only symplectic, but also Hamiltonian. For instance, for the
first order algorithm above, we have $f_{\Delta t}=f_{\Delta t}^{K}$ where $K
$ is the now time-dependent Hamiltonian%
\begin{equation}
K(x,p,t)=U(p)+V(x-\partial_{p}U(p)t). \label{KUV}%
\end{equation}

When the Hamiltonian $H$ is itself time-dependent its flow does no longer
enjoy the group property $f_{t}^{H}f_{t^{\prime}}^{H}=f_{t+t^{\prime}}^{H}$,
so one has to redefine the notion of algorithm in some way. This can be done
by considering the time-dependent flow $(f_{t,t^{\prime}}^{H})$ defined by
(\ref{ftt'}): $f_{t,t^{\prime}}^{H}=f_{t}^{H}\left(  f_{t^{\prime}}%
^{H}\right)  ^{-1}$. One then uses the following trick: define the
\emph{suspended flow }$(\widetilde{f_{t}^{H}})$ by the formula
\begin{equation}
\widetilde{f_{t}^{H}}(z^{\prime},t^{\prime})=(f_{t+t^{\prime},t^{\prime}}%
^{H}(z^{\prime}),t+t^{\prime}); \label{susp}%
\end{equation}
one verifies that the mappings $\widetilde{f_{t}^{H}}:$ $\mathbb{R}^{2n}%
\times\mathbb{R\longrightarrow R}^{2n}\times\mathbb{R}$ (the \textquotedblleft
extended phase space\textquotedblright) satisfy the one-parameter group law
$\widetilde{f_{t}^{H}}\widetilde{f_{t^{\prime}}^{H}}=\widetilde{f_{t+t^{\prime
}}^{H}}$ and one may then define a notion of algorithm approximating
$\widetilde{f_{t}^{H}}$ (see Struckmeier \cite{struck} for the extended phase
space approach).

\section{Hamiltonian Deformations of Gabor Systems\label{sechade}}

Let $f\in\operatorname*{Ham}(n)$ and $(f_{t})_{0\leq t\leq1}$ a Hamiltonian
isotopy joining the identity to $f$; in view of Proposition \ref{th2} there
exists a Hamiltonian function $H$ such that $f_{t}=f_{t}^{H}$ for $0\leq
t\leq1$. We want to study the deformation of a $\hbar$-Gabor frame
$\mathcal{G}(\phi,\Lambda)$ by $(f_{t})_{0\leq t\leq1}$; that is we want to
define a deformation%
\begin{equation}
\mathcal{G}(\phi,\Lambda)\overset{f_{t}}{\longrightarrow}\mathcal{G}%
(\widehat{U}_{t}\phi,f_{t}\Lambda); \label{A}%
\end{equation}
here $\widehat{U}_{t}$ is an (unknown) operator associated in some (yet
unknown) way with $f_{t}$. The fact that when $f_{t}=S_{t}\in
\operatorname*{Sp}(n) $ we have%
\begin{equation}
\mathcal{G}(\phi,\Lambda)\overset{S_{t}}{\longrightarrow}\mathcal{G}%
(\widehat{S}_{t}\phi,S_{t}\Lambda); \label{B}%
\end{equation}
where $\widehat{S}_{t}\in\operatorname*{Mp}(n)$ with $S_{t}=\pi^{\hslash
}(\widehat{S}_{t})$ suggests that:

\begin{itemize}
\item \emph{The operators }$\widehat{U}_{t}$\emph{\ should be unitary;}

\item \emph{The deformation (\ref{A}) should reduce to (\ref{B}) when the
isotopy }$(f_{t})_{0\leq t\leq1}$ lies in $\operatorname*{Sp}(n)$.
\end{itemize}

The following property of the metaplectic representation gives us a clue. Let
$(S_{t})$ be a Hamiltonian isotopy in $\operatorname*{Sp}(n)\subset
\operatorname*{Ham}(n)$. We have seen in Proposition \ref{proph} that there
exists a Hamiltonian function%
\[
H(z,t)=\tfrac{1}{2}M(t)z\cdot z
\]
with associated phase flow precisely $(S_{t})$. Consider now the
Schr\"{o}dinger equation%
\[
i\hbar\frac{\partial\psi}{\partial t}=\widehat{H}\psi\text{ \ , \ }\psi
(\cdot,0)=\psi_{0}%
\]
where $\widehat{H}$ is the Weyl quantization of $H$ (it is a formally
self-adjoint operator since $H$ is real). It is well-known
\cite{Birk,Birkbis,Folland} that $\psi=\widehat{S}_{t}\psi_{0}$ where
$(\widehat{S}_{t})_{t}$ is the unique path in $\operatorname*{Mp}(n)$ passing
through the identity and covering $(S_{t})$. This suggests that we should
choose $(\widehat{U}_{t})_{t}$ in the following way: let $H$ be the
Hamiltonian function determined by the Hamiltonian isotopy $(f_{t})$:
$f_{t}=f_{t}^{H}$. Then quantize $H$ into a operator $\widehat{H}$ using the
Weyl correspondence, and let $(\widehat{U}_{t})$ be the solution of
Schr\"{o}dinger's equation%
\[
i\hbar\frac{\mathrm{d}}{\mathrm{d}t}\widehat{U}_{t}=\widehat{H}\widehat{U}%
_{t}\text{ \ , \ }F_{0}=I_{\mathrm{d}}.
\]
The operators $\widehat{U}_{t}$ are unitary. Let in fact $u(t)=(\widehat
{U}_{t}\psi|\widehat{U}_{t}\psi)$ where $\psi$ is in the domain of
$\widehat{H}$ (we assume it contains $\mathcal{S}(\mathbb{R}^{n})$); we have%
\[
i\hbar\dot{u}(t)=(\widehat{H}\widehat{U}_{t}\psi|\widehat{U}_{t}%
\psi)-(\widehat{U}_{t}\psi|\widehat{H}\widehat{U}_{t}\psi)=0
\]
since $\widehat{H}$ is (formally) self-adjoint; it follows that $(\widehat
{U}_{t}\psi|\widehat{U}_{t}\psi)=(\psi|\psi)$.

While definition (\ref{B}) of a Hamiltonian deformation of a Gabor system is
\textquotedblleft reasonable\textquotedblright, its practical implementation
is difficult because it requires the solution of a Schr\"{o}dinger equation.
We will try to find a weaker, more tractable definition of the correspondence
\emph{(\ref{A}), }which is easier to implement numerically.

\subsection{The semiclassical approach}

The idea of this method comes from semiclassical mechanics; historically it
seems to be due to Heller \cite{heller}. Hagedorn \cite{hado1,hado2} has given
the method a firm mathematical basis; also see Littlejohn's review paper
\cite{Littlejohn} and the construction in \cite{Naza}.

For fixed $z_{0}$ we set $z_{t}=f_{t}^{H}(z_{0})$ and define the new Hamilton
function
\begin{equation}
H_{z_{0}}(z,t)=(\partial_{z}H)(z_{t},t)(z-z_{t})+\tfrac{1}{2}D_{z}^{2}%
H(z_{t},t)(z-z_{t})^{2}; \label{k}%
\end{equation}
it is the Taylor series of $H$ at $z_{t}$ with terms of order $0$ and $>2$
suppressed. The corresponding Hamilton equations are
\begin{equation}
\dot{z}=J\partial_{z}H(z_{t},t)+JD_{z}^{2}H(z_{t},t)(z-z_{t}). \label{hamk}%
\end{equation}
We make the following obvious but essential observation: in view of the
uniqueness theorem for the solutions of Hamilton's equations, the solution of
(\ref{hamk}) with initial value $z_{0}$ is the same as that of the Hamiltonian
system
\begin{equation}
\dot{z}(t)=J\partial_{z}H(z(t),t) \label{hamo}%
\end{equation}
with $z(0)=z_{0}$. Denoting by $(f_{t}^{H_{z_{0}}})$ the Hamiltonian flow
determined by $H_{z_{0}}$ we thus have $f_{t}^{H}(z_{0})=f_{t}^{H_{z_{0}}%
}(z_{0})$. More generally, the flows $(f_{t}^{H_{z_{0}}})$ and $(f_{t}^{H})$
are related by a simple formula:

\begin{proposition}
The solutions of Hamilton's equations (\ref{hamk}) and (\ref{hamo}) are
related by the formula%
\begin{equation}
z(t)=z_{t}+S_{t}(z(0)-z_{0}) \label{bf}%
\end{equation}
where $z_{t}=f_{t}^{H}(z_{0})$, $z(t)=f_{t}^{H}(z)$ and $(S_{t})_{t}$ is the
phase flow determined by the quadratic time-dependent Hamiltonian%
\begin{equation}
H^{0}(z,t)=\tfrac{1}{2}D_{z}^{2}H(z_{t},t)z\cdot z. \label{ho}%
\end{equation}
Equivalently,%
\begin{equation}
f_{t}^{H}(z)=T[z_{t}-S_{t}(z_{0})]S_{t}(z(0)) \label{bfbis}%
\end{equation}
where $T(\cdot)$ is the translation operator.
\end{proposition}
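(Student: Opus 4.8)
The plan is to reduce the nonlinear relation to a linear Cauchy problem by subtracting the reference trajectory. Throughout, $z(t)$ denotes the solution of (\ref{hamk}) with initial value $z(0)$ (so $z(t)=f_t^{H_{z_0}}(z)$), while $z_t=f_t^H(z_0)$ which, by the observation made just before the statement, also solves (\ref{hamo}). First I would introduce $w(t)=z(t)-z_t$ and differentiate in $t$, substituting (\ref{hamk}) for $\dot z(t)$ and (\ref{hamo}) for $\dot z_t$. The inhomogeneous term $J\partial_z H(z_t,t)$ occurs in both expressions and cancels, so that $w$ solves the purely linear, time-dependent system
\[
\dot w(t)=JD_z^2H(z_t,t)\,w(t),\qquad w(0)=z(0)-z_0 .
\]

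Next I would recognize this system as the Hamilton equations of the quadratic Hamiltonian $H^0(z,t)=\tfrac12 D_z^2H(z_t,t)z\cdot z$ of (\ref{ho}): since the Hessian $D_z^2H(z_t,t)$ is symmetric one has $\partial_z H^0(z,t)=D_z^2H(z_t,t)z$, so $\dot z=J\partial_z H^0$ is exactly the displayed equation; moreover, $H^0$ being a real homogeneous quadratic form, its phase flow $(S_t)_t$ is linear with $S_0=I$, and in fact $S_t\in\operatorname*{Sp}(n)$ because its generator $X_t=JD_z^2H(z_t,t)$ lies in $\mathfrak{sp}(n)$ (the same computation as in the proof of Proposition \ref{proph}). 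Note that $S_t$ depends on $z_0$ through the reference curve $z_t$, but not on the running point $z(0)$. By uniqueness for the linear Cauchy problem, $w(t)=S_t w(0)$, that is
\[
z(t)=z_t+S_t\bigl(z(0)-z_0\bigr),
\]
which is (\ref{bf}). Rewriting the right-hand side as $S_t z(0)+\bigl(z_t-S_t z_0\bigr)$ and reading off the action of a translation gives $z(t)=T[z_t-S_t(z_0)]\bigl(S_t z(0)\bigr)$, which is (\ref{bfbis}).

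I do not expect a genuine obstacle: the content is simply that the truncated Hamiltonian $H_{z_0}$ was built so that the gap between a general solution of (\ref{hamk}) and the reference solution $z_t$ satisfies the variational equation along $z_t$ \emph{exactly}, not merely to first order, and that this variational flow is itself Hamiltonian with quadratic generator $H^0$. The only points that need a word of care are: (i) that $H^0$ is well defined and has Hamiltonian vector field $JD_z^2H(z_t,t)z$, which uses symmetry of the Hessian; (ii) the regularity justifying existence and uniqueness for the linear system, which follows from the standing hypothesis that $H$ is $C^2$ in $z$ and continuous in $t$, so that $t\mapsto D_z^2H(z_t,t)$ is continuous along the trajectory; and (iii) bookkeeping — in particular that the $z(t)$ entering (\ref{bf})--(\ref{bfbis}) is the flow of the linearized Hamiltonian, $f_t^{H_{z_0}}(z)$, which coincides with $f_t^H$ only along the reference trajectory, as recorded in the identity $f_t^H(z_0)=f_t^{H_{z_0}}(z_0)$ noted just above the statement.
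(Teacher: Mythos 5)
Your proof is correct and follows essentially the same route as the paper: set $u=z(t)-z_t$, cancel the inhomogeneous term to get the linear variational equation $\dot u=JD_z^2H(z_t,t)u$, identify this as the Hamilton equation for $H^0$, and conclude $u(t)=S_tu(0)$. Your added remarks on the symmetry of the Hessian, the regularity, and the fact that $S_t$ depends on $z_0$ but not on $z(0)$ are sound tidying of details the paper leaves implicit.
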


\begin{proof}
Let us set $u=z-z_{t}$. We have, taking (\ref{hamk}) into account,
\[
\dot{u}+\dot{z}_{t}=J\partial_{z}H(z(t),t)+JD_{z}^{2}H(z_{t},t)u
\]
that is, since $\dot{z}_{t}=J\partial_{z}H(z_{t},t)$,%
\[
\dot{u}=JD_{z}^{2}H(z_{t},t)u.
\]
It follows that $u(t)=S_{t}(u(0))$ and hence%
\[
z(t)=f_{t}^{H}(z_{0})+S_{t}u(0)=z_{t}-S_{t}(z_{0})+S_{t}(z(0))
\]
which is precisely (\ref{bf}).
\end{proof}

The nearby orbit method (at order $N=0$) consists in making the Ansatz that
the approximate solution to Schr\"{o}dinger's equation
\[
i\hslash\frac{\partial\psi}{\partial t}=\widehat{H}\psi\text{ \ ,\ \ }%
\psi(\cdot,0)=\phi_{z_{0}}^{\hbar}%
\]
where
\begin{equation}
\phi_{z_{0}}^{\hbar}=\widehat{T}^{\hbar}(z_{0})\phi_{0}^{\hbar} \label{b}%
\end{equation}
is the standard coherent state centered at $z_{0}$ is given by the formula \
\begin{equation}
\widetilde{\psi}(x,t)=e^{\frac{i}{\hbar}\gamma(t,z_{0})}\widehat{T}^{\hbar
}(z_{t})\widehat{S}_{t}(z_{0})\widehat{T}^{\hbar}(z_{0})^{-1}\phi_{z_{0}%
}^{\hbar} \label{nom}%
\end{equation}
where the phase $\gamma(t,z_{0})$ is the symmetrized action%
\begin{equation}
\gamma(t,z_{0})=\int_{0}^{t}\left(  \tfrac{1}{2}\sigma(z_{t^{\prime}},\dot
{z}_{t^{\prime}})-H(z_{t^{\prime}},t^{\prime})\right)  \mathrm{d}t^{\prime}
\label{phase}%
\end{equation}
calculated along the Hamiltonian trajectory leading from $z_{0}$ at time
$t_{0}=0$ to $z_{t}$ at time $t$. One shows that under suitable conditions on
the Hamiltonian $H$ the approximate solution satisfies, for $|t|\leq T$, an
estimate of the type%
\begin{equation}
||\psi(\cdot,t)-\widetilde{\psi}(\cdot,t)||\leq C(z_{0},T)\sqrt{\hbar}|t|
\label{psio}%
\end{equation}
where $C(z_{0},T)$ is a positive constant depending only on the initial point
$z_{0}$ and the time interval $[-T,T]$ (Hagedorn \cite{hado1,hado2}).

\begin{remark}
Formula (\ref{nom}) shows that the solution of Schr\"{o}dinger's equation with
initial datum $\phi_{0}^{\hbar}$ is approximately the Gaussian obtained by
propagating $\phi_{0}^{\hbar}$ along the Hamiltonian trajectory starting from
$z=0$ while deforming it using the metaplectic lift of the linearized flow
around this point.
\end{remark}

\subsection{Application to Gabor frames}

Let us state and prove the main results of this paper.

In what follows we consider a Gaussian Gabor system $\mathcal{G}(\phi
_{0}^{\hbar},\Lambda)$; applying the nearby orbit method to $\phi_{0}^{\hbar}$
yields the approximation%
\begin{equation}
\phi_{t}^{\hbar}=e^{\frac{i}{\hbar}\gamma(t,0)}\widehat{T}^{\hbar}%
(z_{t})\widehat{S}_{t}\phi_{0}^{\hbar} \label{fth}%
\end{equation}
where we have set $\widehat{S}_{t}=\widehat{S}_{t}(0)$. Let us consider the
Gabor system $\mathcal{G}(\phi_{t}^{\hbar},\Lambda_{t})$ where $\Lambda
_{t}=f_{t}^{H}(\Lambda)$.

\begin{proposition}
\label{propmain1}The Gabor system $\mathcal{G}(\phi_{t}^{\hbar},\Lambda_{t})$
is a Gabor $\hbar$-frame if and only if $\mathcal{G}(\phi_{0}^{\hbar}%
,\Lambda)$ is a a Gabor $\hbar$-frame; when this is the case both frames have
the same bounds.
\end{proposition}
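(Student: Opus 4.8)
The plan is to reduce this statement to the already-established symplectic and translational covariance results (Propositions \ref{prop2} and \ref{propzozun}) by unwinding the definition of $\phi_t^{\hbar}$ in (\ref{fth}). The key observation is that the global phase factor $e^{i\gamma(t,0)/\hbar}$ has modulus one, and so it drops out of every term $|(\psi|\widehat{T}^{\hbar}(z)\phi_t^{\hbar})|^2$ appearing in the frame inequalities (\ref{frame1}); thus we may harmlessly ignore it. What remains is to understand the system $\mathcal{G}(\widehat{T}^{\hbar}(z_t)\widehat{S}_t\phi_0^{\hbar}, f_t^H(\Lambda))$, where $\widehat{S}_t\in\operatorname{Mp}(n)$ covers the linearized flow $S_t$ around the origin. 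Wait — there is a subtlety here: the lattice is transported by the full nonlinear flow $f_t^H$, not by the linear map $S_t$, so Proposition \ref{prop2} does not apply directly. I would therefore invoke formula (\ref{bfbis}), $f_t^H(z)=T[z_t-S_t(z_0)]S_t(z)$ with $z_0=0$ (so $z_t=f_t^H(0)$ is the image of the origin), which gives $f_t^H = T(z_t)\circ S_t$ as an affine map. Hence $\Lambda_t = f_t^H(\Lambda) = T(z_t)(S_t\Lambda)$.

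With this decomposition in hand, the argument proceeds in two steps. First, apply Proposition \ref{prop2} with the metaplectic operator $\widehat{S}_t$ and its projection $S_t$: $\mathcal{G}(\phi_0^{\hbar},\Lambda)$ is a $\hbar$-frame with bounds $a,b$ if and only if $\mathcal{G}(\widehat{S}_t\phi_0^{\hbar}, S_t\Lambda)$ is, with the same bounds. Second, apply Proposition \ref{propzozun} with window shift $z_0 = z_t$ and lattice shift $z_1 = z_t$: $\mathcal{G}(\widehat{S}_t\phi_0^{\hbar}, S_t\Lambda)$ is a $\hbar$-frame iff $\mathcal{G}(\widehat{T}^{\hbar}(z_t)\widehat{S}_t\phi_0^{\hbar}, T(z_t)(S_t\Lambda))$ is, again with identical bounds. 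Chaining these, and restoring the irrelevant phase factor, yields exactly the claim: $\mathcal{G}(\phi_t^{\hbar},\Lambda_t)$ is a $\hbar$-frame iff $\mathcal{G}(\phi_0^{\hbar},\Lambda)$ is, with the same frame bounds $a$ and $b$.

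The main obstacle — really the only non-mechanical point — is recognizing that the nearby-orbit window (\ref{fth}) is, up to an inessential unimodular constant, precisely a Heisenberg translate of a metaplectic transform of the original Gaussian, and that the deformed lattice $f_t^H(\Lambda)$ factors through the \emph{same} translation vector $z_t$ and the \emph{same} symplectic matrix $S_t$. The matching of the translation vectors in the window and in the lattice is what makes Proposition \ref{propzozun} applicable (that proposition in fact allows independent shifts $z_0$ and $z_1$, so even a mismatch would be harmless, but the honest statement is that they coincide here). Once (\ref{bfbis}) is used to linearize the lattice action, everything else is a direct citation of the two covariance propositions, and the preservation of the frame bounds is automatic since both $\widehat{S}_t$ and $\widehat{T}^{\hbar}(z_t)$ are unitary and the phase is unimodular. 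I would also remark, as the authors surely will, that $\phi_0^{\hbar}$ need only be assumed to lie in $L^2(\mathbb{R}^n)$ (or $\mathcal{S}(\mathbb{R}^n)$, or $S_0(\mathbb{R}^n)$) for the cited propositions to apply, so the Gaussian hypothesis is used only insofar as it is the natural input of the nearby-orbit construction.
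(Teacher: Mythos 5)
Your proposal is correct and follows essentially the same route as the paper: both hinge on writing the deformed lattice via the affine factorization $f_{t}^{H}=T(z_{t})\circ S_{t}$ (from (\ref{bf})--(\ref{bfbis}) with $z_{0}=0$), discarding the unimodular phase $e^{i\gamma(t,0)/\hbar}$, and then using the translational and symplectic covariance of the Heisenberg--Weyl operators together with unitarity. The only difference is one of packaging: the paper unwinds the commutation and product formulas (\ref{hwt1}), (\ref{hwt2}) and the covariance formula (\ref{symco}) inline, whereas you obtain the same chain of equivalences by citing Propositions \ref{prop2} and \ref{propzozun} directly, which is a legitimate and slightly cleaner presentation of the identical argument.
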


\begin{proof}
Writing
\begin{equation}
I_{t}(\psi)=\sum_{z\in\Lambda_{t}}|(\psi|\widehat{T}^{\hbar}(z)\phi_{t}%
^{\hbar})|^{2}%
\end{equation}
we set out to show that the inequality
\begin{equation}
a||\psi||^{2}\leq I_{t}(\psi)\leq b||\psi||^{2} \label{ineq}%
\end{equation}
(for all $\psi\in L^{2}(\mathbb{R}^{n})$) holds for every $t$ if and only if
it holds for $t=0$ (for all $\psi\in L^{2}(\mathbb{R}^{n})$). In view of
definition (\ref{fth}) we have
\[
I_{t}(\psi)=\sum_{z\in\Lambda_{t}}|(\psi|\widehat{T}^{\hbar}(z)\widehat
{T}^{\hbar}(z_{t})\widehat{S}_{t}\phi_{0}^{\hbar})|^{2};
\]
the commutation formula (\ref{hwt1}) yields%
\[
\widehat{T}^{\hbar}(z)\widehat{T}^{\hbar}(z_{t})=e^{i\sigma(z,z_{t})/\hbar
}\widehat{T}^{\hbar}(z_{t})\widehat{T}^{\hbar}(z)
\]
and hence%
\begin{align*}
I_{t}(\psi)  &  =\sum_{z\in\Lambda_{t}}|(\psi|\widehat{T}^{\hbar}%
(z_{t}))\widehat{T}^{\hbar}(z)\widehat{S}_{t}\phi_{0}^{\hbar})|^{2}\\
&  =\sum_{z\in\Lambda}|(\psi|\widehat{T}^{\hbar}(z_{t})\widehat{T}^{\hbar
}(f_{t}^{H}(z))\widehat{S}_{t}\phi_{0}^{\hbar})|^{2}.
\end{align*}
Since $\widehat{T}^{\hbar}(z_{t})$ is unitary the inequality (\ref{ineq}) is
thus equivalent to%
\begin{equation}
a||\psi||^{2}\leq\sum_{z\in\Lambda}|(\psi|\widehat{T}^{\hbar}(f_{t}%
^{H}(z))\widehat{S}_{t}\phi_{0}^{\hbar})|^{2}\leq b||\psi||^{2}.
\label{ineqbis}%
\end{equation}
In view of formula (\ref{bf}) we have, since $S_{t}z_{0}=0$ because $z_{0}%
=0$,
\[
f_{t}^{H}(z)=S_{t}z+f_{t}^{H}(0)=S_{t}z+z_{t}%
\]
hence the inequality (\ref{ineqbis}) can be written%
\begin{equation}
a||\psi||^{2}\leq\sum_{z\in\Lambda}|(\psi|\widehat{T}^{\hbar}(S_{t}%
z+z_{t})\widehat{S}_{t}\phi_{0}^{\hbar})|^{2}\leq b||\psi||^{2}.
\label{ineqter}%
\end{equation}
In view of the product formula (\ref{hwt2}) for Heisenberg--Weyl operators we
have%
\[
\widehat{T}^{\hbar}(S_{t}z+z_{t})=e^{i\sigma(S_{t}z,z_{t})/2\hbar}\widehat
{T}^{\hbar}(z_{t})\widehat{T}^{\hbar}(S_{t}z)
\]
so that (\ref{ineqter}) becomes%
\begin{equation}
a||\psi||^{2}\leq\sum_{z\in\Lambda}|(\psi|\widehat{T}^{\hbar}(z_{t}%
)\widehat{T}^{\hbar}(S_{t}z)\widehat{S}_{t}\phi_{0}^{\hbar})|^{2}\leq
b||\psi||^{2}; \label{inequart}%
\end{equation}
the unitarity of $\widehat{T}^{\hbar}(z_{t})$ implies that (\ref{inequart}) is
equivalent to%
\begin{equation}
a||\psi||^{2}\leq\sum_{z\in\Lambda}|(\psi|\widehat{T}^{\hbar}(S_{t}%
z)\widehat{S}_{t}\phi_{0}^{\hbar})|^{2}\leq b||\psi||^{2}. \label{inecinq}%
\end{equation}
Using the symplectic covariance formula (\ref{symco}) we have%
\[
\widehat{T}^{\hbar}(S_{t}z)\widehat{S}_{t}=\widehat{S}_{t}\widehat{T}^{\hbar
}(z)
\]
so that the inequality (\ref{inecinq}) can be written%
\[
a||\psi||^{2}\leq\sum_{z\in\Lambda}|(\psi|\widehat{S}_{t}\widehat{T}^{\hbar
}(z)\phi_{0}^{\hbar})|^{2}\leq b||\psi||^{2};
\]
since $\widehat{S}_{t}$ is unitary, this is equivalent to%
\[
a||\psi||^{2}\leq\sum_{z\in\Lambda}|(\psi|\widehat{T}^{\hbar}(z)\phi
_{0}^{\hbar})|^{2}\leq b||\psi||^{2}.
\]
The Proposition follows.
\end{proof}

The fact that we assumed that the window is the centered coherent state
$\phi_{0}^{\hbar}$ is not essential. For instance, Proposition \ref{propzozun}
shows that the result remains valid if we replace $\phi_{0}^{\hbar}$ with a
coherent state having arbitrary center, for instance $\phi_{z_{0}}^{\hbar
}=\widehat{T}^{\hbar}(z_{0})\phi_{0}^{\hbar}$. More generally:

\begin{corollary}
\label{propmain2}Let $\mathcal{G}(\phi,\Lambda)$ be a Gabor system where the
window $\phi$ is the Gaussian%
\begin{equation}
\phi_{M}^{\hbar}(x)=\left(  \tfrac{\det\operatorname{Im}M}{(\pi\hbar)^{n}%
}\right)  ^{1/4}e^{\frac{i}{2\hbar}Mx\cdot x} \label{fimgauss}%
\end{equation}
where $M=M^{T}$, $\operatorname{Im}M>0$. Then $\mathcal{G}(\phi_{t}^{\hbar
},\Lambda_{t})$ is a Gabor $\hbar$-frame if and only if it is the case for
$\mathcal{G}(\phi,\Lambda)$.
\end{corollary}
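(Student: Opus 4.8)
The plan is to run the computation in the proof of Proposition~\ref{propmain1} essentially verbatim, after checking that the nearby-orbit set-up is legitimate for the window $\phi_M^\hbar$ and produces a deformed window of the form (\ref{fth}). First I would note that since $M=M^T$ and $\operatorname{Im}M>0$, the function $\phi_M^\hbar$ in (\ref{fimgauss}) lies in $\mathcal{S}(\mathbb{R}^n)\subset L^2(\mathbb{R}^n)$ and is normalized, so $\mathcal{G}(\phi_M^\hbar,\Lambda)$ is a genuine Gabor system. Next I would check that $\phi_M^\hbar$ is centred at the origin of phase space: $|\phi_M^\hbar(x)|^2$ is proportional to $e^{-\frac{1}{\hbar}(\operatorname{Im}M)x\cdot x}$, which is even, so the position expectation vanishes, while the phase gradient $\tfrac{1}{\hbar}(\operatorname{Re}M)x$ is odd, so the momentum expectation vanishes as well. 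Hence the nearby-orbit method with initial datum $\phi_M^\hbar$ uses the orbit $z_t=f_t^H(0)$ through $z_0=0$ and its linearized flow $(S_t)=(S_t(0))$, and yields exactly $\phi_t^\hbar=e^{\frac{i}{\hbar}\gamma(t,0)}\widehat{T}^\hbar(z_t)\widehat{S}_t\phi_M^\hbar$ together with $\Lambda_t=f_t^H(\Lambda)$, precisely as in (\ref{fth}).

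Granting this, I would simply repeat the chain of manipulations in the proof of Proposition~\ref{propmain1}, writing $I_t(\psi)=\sum_{z\in\Lambda_t}|(\psi|\widehat{T}^\hbar(z)\phi_t^\hbar)|^2$. The unimodular phase $e^{\frac{i}{\hbar}\gamma(t,0)}$ drops out; then one successively (i) uses the commutation relation (\ref{hwt1}) and the unitarity of $\widehat{T}^\hbar(z_t)$ to strip off $\widehat{T}^\hbar(z_t)$, (ii) reindexes the sum over $\Lambda_t=f_t^H(\Lambda)$ and substitutes $f_t^H(z)=S_tz+z_t$, which is (\ref{bf}) with $z_0=0$ (so that $S_tz_0=0$), (iii) applies the product formula (\ref{hwt2}) and again the unitarity of $\widehat{T}^\hbar(z_t)$, and finally (iv) invokes the symplectic covariance identity $\widehat{T}^\hbar(S_tz)\widehat{S}_t=\widehat{S}_t\widehat{T}^\hbar(z)$ from (\ref{symco}) together with the unitarity of $\widehat{S}_t$. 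The upshot is that, for every $\psi\in L^2(\mathbb{R}^n)$,
\[
a||\psi||^2\le I_t(\psi)\le b||\psi||^2
\quad\Longleftrightarrow\quad
a||\psi||^2\le\sum_{z\in\Lambda}|(\psi|\widehat{T}^\hbar(z)\phi_M^\hbar)|^2\le b||\psi||^2,
\]
i.e.\ $\mathcal{G}(\phi_t^\hbar,\Lambda_t)$ is a $\hbar$-frame with bounds $a,b$ if and only if $\mathcal{G}(\phi_M^\hbar,\Lambda)$ is. Taking $t=0$ recovers the original system, which gives the corollary.

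I do not anticipate a real obstacle: the proof of Proposition~\ref{propmain1} never used anything about $\phi_0^\hbar$ other than that it is an $L^2$ window centred at $z=0$, so the only genuinely new point is the phase-space centring remark, which is what licenses using formula (\ref{fth}) with the orbit through the origin. As side remarks I would mention that, alternatively, Proposition~\ref{prop4} permits writing $\phi_M^\hbar=\widehat{S}_0\phi_0^\hbar$ for a suitable $\widehat{S}_0\in\operatorname*{Mp}(n)$, reducing to the coherent-state case, and that combining the corollary with Proposition~\ref{propzozun} further extends the statement to translated Gaussians $\widehat{T}^\hbar(z_0)\phi_M^\hbar$ centred at an arbitrary $z_0\in\mathbb{R}^{2n}$.
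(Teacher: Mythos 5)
Your proof is correct, but it takes a genuinely different route from the paper's. You observe that the chain of manipulations in the proof of Proposition \ref{propmain1} --- dropping the unimodular phase, stripping off $\widehat{T}^{\hbar}(z_{t})$ via (\ref{hwt1}), substituting $f_{t}^{H}(z)=S_{t}z+z_{t}$ from (\ref{bf}), then using (\ref{hwt2}), the covariance formula (\ref{symco}) and unitarity --- nowhere uses the specific form of $\phi_{0}^{\hbar}$, and you simply re-run it with $\phi_{M}^{\hbar}$ in its place. The paper instead argues by reduction: by the transitivity of the metaplectic action on Gaussians (Appendix A) one writes $\phi_{M}^{\hbar}=\widehat{S}\phi_{0}^{\hbar}$ for some $\widehat{S}\in\operatorname*{Mp}(n)$, so that by Proposition \ref{prop2} the system $\mathcal{G}(\phi_{M}^{\hbar},\Lambda)$ is a $\hbar$-frame if and only if $\mathcal{G}(\phi_{0}^{\hbar},S^{-1}\Lambda)$ is, and then invokes Proposition \ref{propmain1}; this is exactly the alternative you relegate to a side remark. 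Each approach has something to offer. The paper's reduction is two lines and highlights the structural role of the $\operatorname*{Mp}(n)$-action on the Siegel half-space, though to literally recover the statement about $\mathcal{G}(\phi_{t}^{\hbar},\Lambda_{t})$ one still has to match the deformed window and the deformed lattice after replacing $\Lambda$ by $S^{-1}\Lambda$, which costs a small extra computation of the kind you carry out anyway. Your direct computation avoids that bookkeeping and in fact proves more: the equivalence of the frame property for $\mathcal{G}(\phi,\Lambda)$ and $\mathcal{G}(\phi_{t}^{\hbar},\Lambda_{t})$, with $\phi_{t}^{\hbar}$ defined by (\ref{fth}), holds for an \emph{arbitrary} window $\phi\in L^{2}(\mathbb{R}^{n})$; the Gaussian hypothesis only serves to make the nearby-orbit Ansatz a semiclassically meaningful approximation of the true propagation. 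For the same reason your phase-space centring check, while a sensible sanity check on the choice $z_{0}=0$ in (\ref{fth}), is not actually needed for the frame equivalence itself --- it is relevant only to the accuracy estimate (\ref{psio}), not to the algebraic identities that drive the proof.
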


\begin{proof}
It follows from the properties of the action of the metaplectic group on
Gaussians (see Appendix A) that there exists $\widehat{S}\in\operatorname*{Mp}%
(n)$ such that $\phi_{M}^{\hbar}=\widehat{S}\phi_{0}^{\hbar}$. Let
$S=\pi^{\hbar}(\widehat{S})$ be the projection on $\operatorname*{Sp}(n)$ of
$\widehat{S}$; the Gabor system $\mathcal{G}(\phi_{M}^{\hbar},\Lambda)$ is a
$\hbar$-frame if and only if $\mathcal{G}(\widehat{S}^{-1}\phi_{M}^{\hbar
},S^{-1}\Lambda)=\mathcal{G}(\phi_{0}^{\hbar},S^{-1}\Lambda)$ is a $\hbar
$-frame in view of Proposition \ref{prop2}. The result now follows from
Proposition \ref{propmain1}.
\end{proof}

\section{Discussion and Additional Remarks}

We have given one working definition of the notion of Hamiltonian deformation
of a Gabor frame; this definition uses ideas from semiclassical mechanics.
However, we have used no approximations. We could therefore call this
deformation scheme \textquotedblleft weak Hamiltonian
deformation\textquotedblright. An important remark is that in all our results,
one can assume that the window $\phi$ belongs to the Feichtinger algebra
$S_{0}(\mathbb{R}^{n})$ (reviewed in Appendix B). This is due to the fact that
we have transformed the Gabor frames under consideration only by the phase
space translations $\widehat{T}^{\hbar}(z)$ and by metaplectic operators; it
turns out that $S_{0}(\mathbb{R}^{n})$ is the smallest Banach algebra
invariant under these operations, and thus semiclassical propagation preserve
the Feichtinger algebra (see de Gosson \cite{miop}). A consequence is that the
weak Hamiltonian deformation scheme behaves well with respect to the
Feichtinger algebra. It is unknown whether this property is conserved under
passage to the general definition (\ref{B}), that is%
\begin{equation}
\mathcal{G}(\phi,\Lambda)\overset{f_{t}}{\longrightarrow}\mathcal{G}%
(\widehat{U}_{t}\phi,f_{t}\Lambda) \label{gut}%
\end{equation}
where $\widehat{U}_{t}$ is the solution of the Schr\"{o}dinger equation
associated with the Hamiltonian operator corresponding to the Hamiltonian
isotopy $(f_{t})_{0\leq t\leq1}$. This because one does not know at the time
of writing if the solution to Schr\"{o}dinger equations with initial data in
$S_{0}(\mathbb{R}^{n})$ also is in $S_{0}(\mathbb{R}^{n})$ for given time $t$.

Since our definition of weak deformations was motivated by semiclassical
considerations one could perhaps consider refinements of this method using the
asymptotic expansions of Hagedorn \cite{hado1,hado2} and his followers; this
could then lead to \textquotedblleft higher order\textquotedblright\ weak
deformations, depending on the number of terms that are retained. Still, there
remains the question of the general definition (\ref{gut}) where the exact
quantum propagator is used. It would indeed be more intellectually (and also
probably practically!) satisfying to study this definition in detail. As we
said, we preferred in this first approach to consider a weaker version because
it is relatively easy to implement numerically using symplectic integrators.
The general case (\ref{gut}) is challenging, but not probably out of reach.
From a theoretical point of view, it amounts to construct an extension of the
metaplectic representation in the non-linear case; that such a representation
indeed exists has been shown in our paper with Hiley \cite{gohi11} (a caveat:
one sometimes finds in the physical literature a claim following which such an
extension could not be constructed; a famous theorem of Groenewold and Van
Hove being invoked. This is merely a misunderstanding of this theorem, which
only claims that there is no way to extend the metaplectic representation so
that the Dirac correspondence between Poisson brackets and commutators is
preserved). There remains the problem of how one could prove that the
deformation scheme (\ref{gut}) preserves the frame property; a possible
approach could consisting in using a time-slicing (as one does for symplectic
integrators); this would possibly also lead to some insight on whether the
Feichtinger algebra is preserved by general quantum evolution.

\section*{APPENDIX\ A: METAPLECTIC\ GROUP AND\ GAUSSIANS}

Let $\operatorname*{Mp}(n)$ be the metaplectic representation of the
symplectic group $\operatorname*{Sp}(n)$ (see \cite{Folland,Birk,Birkbis}); it
is a unitary representation of the double cover $\operatorname*{Sp}_{2}(n)$ of
$\operatorname*{Sp}(n)$: we have a short exact sequence%
\[
0\longrightarrow\mathbb{Z}_{2}\longrightarrow\operatorname*{Mp}(n)\overset
{\pi^{\hbar}}{\longrightarrow}\operatorname*{Sp}(n)\longrightarrow0
\]
where $\pi^{\hbar}:$ $\widehat{S}\longmapsto S$ is the covering projection; we
explain the appearance of the subscript $\hslash$ below. The metaplectic group
is generated by the following elementary operators:

\begin{itemize}
\item \textit{Fourier transform}:
\begin{equation}
\widehat{J}\psi=\left(  \tfrac{1}{2\pi i\hbar}\right)  ^{n/2}\int
_{{\mathbb{R}}^{n}}e^{-ixx^{\prime}/\hbar}\psi(x^{\prime})\mathrm{d}x^{\prime}
\label{mp1}%
\end{equation}
(notice the presence of the imaginary unit $i$ in the prefactor);

\item \textit{Unitary dilations}:
\begin{equation}
\widehat{M}_{L,m}\psi=i^{m}\sqrt{|\det L|}\psi(Lx)\text{ \ }(\det L\neq0)
\label{mp2}%
\end{equation}
where $m$ is an integer depending on the sign of $\det L$: $m\in\{0,2\}$ if
$\det L>0$ and $m\in\{1,3\}$ if $\det L<0$;

\item \textquotedblleft\textit{Chirps\textquotedblright}:
\begin{equation}
\widehat{V}_{P}=e^{-iPx^{2}/2\hbar}\psi(x)\text{ \ \ }(P=P^{T}). \label{mp3}%
\end{equation}
The projections on $\operatorname*{Sp}(n)$ of these operators are given by
$\pi^{\hbar}(\widehat{J})=J$, $\pi^{\hbar}(\widehat{M}_{L,m})=M_{L}$, and
$\pi^{\hbar}(\widehat{V}_{P})=V_{P}$ with
\begin{equation}
M_{L}=%
\begin{pmatrix}
L^{-1} & 0\\
0 & L^{T}%
\end{pmatrix}
\text{ \ , \ }V_{P}=%
\begin{pmatrix}
I & 0\\
-P & I
\end{pmatrix}
\label{proj}%
\end{equation}
(the matrices $V_{P}$ are sometimes called \textquotedblleft symplectic
shears\textquotedblright).
\end{itemize}

The projection of a covering group onto its base group is defined only up to
conjugation; our choice --and notation-- is here dictated by the fact that to
the $\hbar$-dependent operators (\ref{mp1}) and (\ref{mp2}) should correspond
the symplectic matrices (\ref{proj}). For instance, in time-frequency analysis
it is customary to make the choice $\hbar=1/2\pi$. Following formula relates
the projections $\pi^{\hbar}$ and $\pi=\pi^{1/2\pi}$: \
\begin{equation}
\pi(\widehat{S})=\pi^{\hbar}(\widehat{M}_{1/\sqrt{2\pi\hbar}}\widehat
{S}\widehat{M}_{\sqrt{2\pi\hbar}}) \label{pich}%
\end{equation}
where $\widehat{M}_{\sqrt{2\pi\hbar}}=\widehat{M}_{\sqrt{2\pi\hbar}I,0}$.

Metaplectic operators are not only unitary operators on $L^{2}(\mathbb{R}%
^{n})$ but also linear automorphisms of $\mathcal{S}(\mathbb{R}^{n})$ which
extend by duality to automorphisms of $\mathcal{S}^{\prime}(\mathbb{R}^{n})$.

There is an alternative way to describe the metaplectic group
$\operatorname*{Mp}(n)$. Let
\begin{equation}
W(x,x^{\prime})=\tfrac{1}{2}Px^{2}-Lx\cdot x^{\prime}+\tfrac{1}{2}Qx^{2}
\label{W}%
\end{equation}
where $P,L,Q$ are real $n\times n$ matrices, $P$ and $Q$ symmetric and $L$
invertible (we are writing $Px^{2}$ for $Px\cdot x$, etc.). Let $m$ be a
choice of $\arg\det L$ as in formula (\ref{mp2}); each $\widehat{S}%
\in\operatorname*{Mp}(n)$ is the product to two operators of the type%
\begin{equation}
\widehat{S}_{W,m}\psi(x)=\left(  \tfrac{1}{2\pi i\hbar}\right)  ^{n/2}%
i^{m}\sqrt{|\det L|}\int_{{\mathbb{R}}^{n}}e^{-iW(x,x^{\prime})/\hbar}%
\psi(x^{\prime})\mathrm{d}x^{\prime} \tag{A1}\label{swm}%
\end{equation}
(see \cite{Leray,Birk,Birkbis}). The operators $\widehat{S}_{W,m}$ can be
factorized as
\begin{equation}
\widehat{S}_{W,m}=\widehat{V}_{-P}\widehat{M}_{L,m}\widehat{J}\psi\widehat
{V}_{-Q} \tag{A2}\label{vpfmlvq}%
\end{equation}
and hence belong to $\operatorname*{Mp}(n)$. The projection $S_{W}=\pi^{\hbar
}(\widehat{S}_{W,m})$ is characterized by the condition%
\[
(x,p)=S_{W}(x^{\prime},p^{\prime})\Longleftrightarrow\left\{
\begin{array}
[c]{c}%
p=\partial_{x}W(x,x^{\prime})\\
p^{\prime}=-\partial_{x}W(x,x^{\prime})
\end{array}
\right.  ;
\]
this condition identifies $W$ with the generating function of first type,
familiar from Hamiltonian mechanics \cite{Arnold,Birk,Birkbis,Leray}. A
straightforward calculation using the expression (\ref{W}) of $W$ yields the
symplectic matrix%
\[
S_{W}=%
\begin{pmatrix}
L^{-1}Q & L^{-1}\\
PL^{-1}Q-L^{T} & PL^{-1}%
\end{pmatrix}
.
\]

The metaplectic group acts on Gaussian functions in a particularly simple way.
Let $M$ be a complex $n\times n$ matrix; we assume in fact that $M$ belongs to
the Siegel half-space
\[
\Sigma_{n}^{+}=\{M:M=M^{T},\operatorname{Im}M>0\}.
\]
We call generalized centered coherent state a Gaussian function of the type
\begin{equation}
\phi_{M}^{\hbar}(x)=\left(  \tfrac{\det\operatorname{Im}M}{(\pi\hbar)^{n}%
}\right)  ^{1/4}e^{\frac{i}{2\hbar}Mx\cdot x} \label{fmz}%
\end{equation}
and for $z_{0}\in\mathbb{R}^{2n}$ we set
\begin{equation}
\phi_{M,z_{0}}^{\hbar}=\widehat{T}^{\hbar}(z_{0})\phi_{M}^{\hbar} \label{fmzo}%
\end{equation}
(it is a Gaussian centered at the point $z_{0}$). The symplectic group
$\operatorname*{Sp}(n)$ acts transitively on the Siegel half-space via the law
\cite{Folland}
\begin{equation}
(S,M)\longmapsto\alpha(S)M=(C+DM)(A+BM)^{-1}. \label{mpc}%
\end{equation}
One can show \cite{Birkbis} that if $M=X+iY$ then
\begin{align}
X  &  =-(CA^{T}+DB^{T})(AA^{T}+BB^{T})^{-1}\label{xaa}\\
Y  &  =(AA^{T}+BB^{T})^{-1}. \label{xbb}%
\end{align}
This action induces in turn a transitive action
\[
(\widehat{S},\phi_{M,z_{0}}^{\hbar})\longmapsto\phi_{\alpha(S)M,Sz_{0}}%
^{\hbar}%
\]
of the metaplectic group $\operatorname*{Mp}(n)$ on the set $G_{n}$ of
Gaussians of the type (\ref{fmzo}). These actions make the following diagram
\[%
\begin{array}
[c]{ccc}%
\operatorname*{Mp}(n)\times G_{n} & \longrightarrow & G_{n}\\
\downarrow &  & \downarrow\\
\operatorname*{Sp}(n)\times\Sigma_{n}^{+} & \longrightarrow & \Sigma_{n}^{+}%
\end{array}
\]
commutative (the vertical arrows being the mappings $(\widehat{S}%
,\phi_{M,z_{0}}^{\hbar})\longmapsto(S,M)$ and $\phi_{M,z_{0}}^{\hbar
}\longmapsto M$, respectively).

The formulas above can be proven by using either the properties of the Wigner
transform, or by a calculation of Gaussian integrals using the operators
$\widehat{S}_{W,m}$ defined by formula (\ref{swm}).

\section*{APPENDIX\ B: FEICHTINGER'S\ ALGEBRA}

The Feichtinger algebra $S_{0}(\mathbb{R}^{n})$ was introduced in
\cite{hanskernel,Hans1,fei81}; it is an important particular case of the
modulation spaces defined by the same author; we refer to Gr\"{o}chenig's
treatise \cite{Gro} for a complete study of these important functional spaces.
Also see Feichtinger and Luef \cite{felu12} for an up to date concise review.

The Feichtinger algebra is usually defined in terms of short-time Fourier
transform%
\begin{equation}
V_{\phi}\psi(z)=\int_{\mathbb{R}^{n}}e^{-2\pi ip\cdot x^{\prime}}%
\psi(x^{\prime})\overline{\phi(x^{\prime}-x)}\mathrm{d}x^{\prime};
\label{stft}%
\end{equation}
which is related to the cross-Wigner transform by the formula%
\begin{equation}
W(\psi,\phi)(z)=\left(  \tfrac{2}{\pi\hbar}\right)  ^{n/2}e^{\frac{2i}{\hbar
}p\cdot x}V_{\phi_{\sqrt{2\pi\hbar}}^{\vee}}\psi_{\sqrt{2\pi\hbar}}%
(z\sqrt{\tfrac{2}{\pi\hbar}}) \label{wstft}%
\end{equation}
where $\psi_{\sqrt{2\pi\hbar}}(x)=\psi(x\sqrt{2\pi\hbar})$ and $\phi^{\vee
}(x)=\phi(-x)$; equivalently%
\begin{equation}
V_{\phi}\psi(z)=\left(  \tfrac{2}{\pi\hbar}\right)  ^{-n/2}e^{-i\pi p\cdot
x}W(\psi_{1/\sqrt{2\pi\hbar}},\phi_{1/\sqrt{2\pi\hbar}}^{\vee})(z\sqrt
{\tfrac{\pi\hbar}{2}}). \label{wstftbis}%
\end{equation}

The Feichtinger algebra $S_{0}(\mathbb{R}^{n})$ consists of all $\psi
\in\mathcal{S}^{\prime}(\mathbb{R}^{n})$ such that $V_{\phi}\psi\in
L^{1}(\mathbb{R}^{2n})$ for every window $\phi$. \ In view of the relations
(\ref{wstft}), (\ref{wstftbis}) this condition is equivalent to $W(\psi
,\phi)\in L^{1}(\mathbb{R}^{2n})$. A function $\psi\in L^{2}(\mathbb{R}^{n})$
belongs to $S_{0}(\mathbb{R}^{n})$ if and only if $W\psi\in L^{1}%
(\mathbb{R}^{2n})$; here $W\psi=W(\psi,\psi)$ is the usual Wigner function.
The number%
\begin{equation}
||\psi||_{\phi,S_{0}}^{\hbar}=||W(\psi,\phi)||_{L^{1}(\mathbb{R}^{2n})}%
=\int_{\mathbb{R}^{2n}}|W(\psi,\phi)(z)|\mathrm{d}z \label{wignorm}%
\end{equation}
is the norm of $\psi$ relative to the window $\phi$. We have the inclusions%
\begin{equation}
\mathcal{S}(\mathbb{R}^{n})\subset S_{0}(\mathbb{R}^{n})\subset C^{0}%
(\mathbb{R}^{n})\cap L^{1}(\mathbb{R}^{n})\cap F(L^{1}(\mathbb{R}^{n}))
\label{inclo}%
\end{equation}
where $F(L^{1}(\mathbb{R}^{n}))$ is the image of $L^{1}(\mathbb{R}^{n})$ by
the Fourier transform. One proves that $S_{0}(\mathbb{R}^{n})$ is an algebra,
both for pointwise multiplication and for convolution.

An essential property of the Feichtinger algebra is that it is closed under
the action of the Weyl-metaplectic group $\operatorname*{WMp}(n)$: if $\psi\in
S_{0}(\mathbb{R}^{n})$, $\widehat{S}\in\operatorname*{Mp}(n)$, and $z_{0}%
\in\mathbb{R}^{n}$ we have both $\widehat{S}\psi\in S_{0}(\mathbb{R}^{n})$ and
$\widehat{T}^{\hbar}(z_{0})\psi\in S_{0}(\mathbb{R}^{n})$. In particular
$\psi\in S_{0}(\mathbb{R}^{n})$ if and only if $F\psi\in S_{0}(\mathbb{R}%
^{n})$. One proves that $S_{0}(\mathbb{R}^{n})$ is the smallest Banach space
containing $\mathcal{S}(\mathbb{R}^{n})$ and having this property.

\begin{acknowledgement}
This work has been supported by a research grant from the Austrian Research
Agency FWF (Projektnummer P23902-N13).
\end{acknowledgement}

\begin{acknowledgement}
The author wishes to thank Hans Feichtinger and Karlheinz Gr\"{o}chenig for
valuable suggestions, criticism, and discussions about Gabor frames.
\end{acknowledgement}


\begin{thebibliography}{99}                                                                                               %


\bibitem {almeida}L.B. Almeida, The fractional Fourier transform and
time-frequency representations, IEEE Trans. Sig. Processing. 42 (11) (1994) 3084--3091.

\bibitem {Arnold}V.I. Arnold, Mathematical Methods of Classical Mechanics,
Graduate Texts in Mathematics, 2nd edition, Springer-Verlag. 1989.

\bibitem {balan}R. Balan, P. Casazza, C. Heil, Z. Landau, Overcompleteness and
localization of frames. I. theory, The Journal of Fourier Analysis and
Applications. 12(2) (2006).

\bibitem {Banyaga}A. Banyaga, Sur la structure du groupe des
diff\'{e}omorphismes qui pr\'{e}servent une forme symplectique, Comm. Math.
Helv. 53 (1978) 174--227.

\bibitem {AB2}A. Bourouihiya, The tensor product of frames, Sampl. Theory
Signal Image Process. 7(1) (2008) 65--76.

\bibitem {bara}R.G. Baraniuk, D.L. Jones, Shear madness: new orthonormal bases
and frames using chirp functions, IEEE Transactions on signal processing.
41(12) (1993) 3543--3549.

\bibitem {barajo}R.G. Baraniuk, D.L. Jones, Wigner-Based Formulation of the
Chirplet Transform, IEEE Transactions on signal processing. 44(12) (1996) 3129--3134.

\bibitem {burdet}G. Burdet, M. Perrin, M. Perroud, Generating Functions for
the Affine Symplectic Group, Commun. math. Phys. 58 (1978) 241--254.

\bibitem {casa}P.G. Casazza, Modern tools for Weyl--Heisenberg (Gabor) frame
theory, Advances in Imaging and Electron Physics. 115 (2001) 1--127.

\bibitem {chorin}A.J. Chorin, T.J.R. Hughes, M.F. McCracken, J.E. Marsden,
Product formulas and numerical algorithms, Commun. Pure Appl. Math., 31(2)
(1978) 205--256.

\bibitem {duits}R. Duits, H. F\"{u}hr, B. Janssen, M. Bruurmijn, L. Florack ,
H. van Assen, Evolution equations on Gabor transforms and their applications,
Appl. Comput. Harmon. A. In print, http://dx.doi.org/10.1016/j.acha.2012.11.007.

\bibitem {ekho90}I. Ekeland, H. Hofer, Symplectic topology and Hamiltonian
dynamics, I and II, Mathematische Zeitschrift. 200 (1990) 355--378 and 203
(1990) 553--567.

\bibitem {hanskernel}H.G. Feichtinger, Un espace de Banach de distributions
temp\'{e}r\'{e}es sur les groupes localement compacts ab\'{e}liens, C. R.
Acad. Sci. Paris., S\'{e}rie A--B 290\textbf{\ }(17) (1980) A791--A794.

\bibitem {Hans1}H.G. Feichtinger, On a new Segal algebra, Monatsh. Math.,
92\textbf{\ }(4), (1981), 269--289.

\bibitem {fei81}H.G. Feichtinger, Banach spaces of distributions of Wiener's
type and interpolation, in\ Functional Analysis and Approximation,
Oberwohlfach, (1980), Internat. Ser. Numer. Math.; 60\textbf{\ }(1981)
Birkh\"{a}user, Basel 153--165.

\bibitem {fegro}H. Feichtinger, K. Gr\"{o}chenig, Gabor frames and
time-frequency analysis of distributions, J. Funct. Anal. 146(2) (1997) 464--495.

\bibitem {feka03}H. Feichtinger, N. Kaiblinger, Varying the Time-Frequency
Lattice of Gabor Frames, Trans. Amer. Math Soc. 356 (5) (2003) 2001--2023.

\bibitem {felu12}H. Feichtinger, F. Luef, Gabor analysis and time-frequency
methods, Encyclopedia of Applied and Computational Mathematics (2012).

\bibitem {Folland}G.B. Folland,\ Harmonic Analysis in Phase space, Annals of
Mathematics studies., Princeton University Press, Princeton, N.J, 1989.

\bibitem {physlett1}M. de Gosson. Phase Space Quantization and the Uncertainty
Principle. Phys. Lett. A, 317(5-6), (2003) 365--369

\bibitem {FP}M. de Gosson, The Symplectic Camel and the Uncertainty Principle:
The Tip of an Iceberg?, Found. Phys. 99 (2009) 194--214.

\bibitem {Birk}M. de Gosson, Symplectic Geometry and Quantum Mechanics.,
Birkh\"{a}user, Basel, 2006.

\bibitem {miop}M. de Gosson, Semi-classical propagation of wavepackets for the
phase space Schr\"{o}dinger equation: interpretation in terms of the
Feichtinger algebra, J.Phys. A: Math. Theor. 41(9) 095202 (2008)

\bibitem {Birkbis}M. de Gosson, Symplectic Methods in Harmonic Analysis and in
Mathematical Physics., Birkh\"{a}user, 2011.

\bibitem {gohi11}M. de Gosson, B.J. Hiley, Imprints of the quantum world in
classical mechanics, Found. Phys. 41 (2011) 1415--1436.

\bibitem {physreps}M. de Gosson, F. Luef, Symplectic capacities and the
geometry of uncertainty: The irruption of symplectic topology in classical and
quantum mechanics, Phys. Rep. 484(5) (2009) 131--179.

\bibitem {Gro}K. Gr\"{o}chenig, Foundations of Time-Frequency Analysis.,
Birkh\"{a}user, Boston, 2000.

\bibitem {GroLyu}K. Gr\"{o}chenig, Yu. Lyubarskii, Gabor (super)frames with
Hermite functions, Math. Annal. 345 (2009) 267--286.

\bibitem {han}Deguang Han, Yang Wang, The existence of Gabor bases and frames,
Contemporary mathematics. 345 (2004) 183--192.

\bibitem {hado1}G. Hagedorn, Semiclassical quantum mechanics III, Ann. Phys.
135 (1981) 58--70.

\bibitem {hado2}G. Hagedorn, Semiclassical quantum mechanics IV, Ann. Inst. H.
Poincar\'{e}. 42 (1985) 363--374.

\bibitem {Heil}C. Heil. A Basis Theory Primer: Expanded Edition., Boston, MA:
Birkh\"{a}user, 2010.

\bibitem {heller}E.J. Heller, Time-dependent approach to semiclassical
dynamics, J. Chem. Phys. 62(4), (1975).

\bibitem {HZ}H. Hofer, H.E. Zehnder, Symplectic Invariants and Hamiltonian
Dynamics., Birkh\"{a}user Advanced texts (Basler Lehrb\"{u}cher),
Birkh\"{a}user Verlag, 1994.

\bibitem {kaiser}G. Kaiser, Deformations of Gabor Frames, Journal of
Mathematical Physics. 35 (1994) 1172.

\bibitem {kang}Kang Feng, Mengzhao Qin, Symplectic Geometric Algorithms for
Hamiltonian Systems, Springer and Zhejiang Publishing United Group Zhejiang, 2010

\bibitem {Klauder}J.R. Klauder, The design of radar signals having both high
range resolution and high velocity resolution, Bell System Tech. J. 39 (1960) 809--820.

\bibitem {Leray}J. Leray, Lagrangian Analysis and Quantum Mechanics,\ a
mathematical structure related to asymptotic expansions and the Maslov index
(the MIT Press, Cambridge, Mass., 1981); translated from Analyse Lagrangienne
RCP 25, Strasbourg Coll\`{e}ge de France, 1976--1977.

\bibitem {Littlejohn}R.G. Littlejohn, The semiclassical evolution of wave
packets, Physics Reports. 138(4--5) (1986) 193--291.

\bibitem {luo}S. Luo, Deforming Gabor frames by quadratic Hamiltonians,
Integral Transforms Spec. Funct. 9(1) (2000) 69--74.

\bibitem {Lyu}Yu.I. Lyubarskii, Frames in the Bargmann space of entire
functions, In Entire and subharmonic functions, Amer. Math. Soc., Providence
RI\ (1992) 167--180.

\bibitem {Marsden}J.E. Marsden, Lectures on Mechanics, online lecture notes
http://authors.library.caltech.edu/21546/1/lom.pdf, 1990.

\bibitem {namias}V. Namias, The fractional order Fourier transform and its
application to quantum mechanics, J. Inst. Appl. Math. 25 (1980) 241--265.

\bibitem {Naza}V. Nazaikiinskii, B.-W. Schulze, B. Sternin, Quantization
Methods in Differential Equations. Differential and Integral Equations and
Their Applications, Taylor \& Francis, 2002.

\bibitem {goetz}G.E. Pfander, P. Rashkov, Yang Wang, A Geometric Construction
of Tight Multivariate Gabor Frames with Compactly Supported Smooth Windows. J.
Fourier Anal. Appl. 18 (2012) 223--239

\bibitem {Polter}L. Polterovich, The Geometry of the Group of Symplectic
Diffeomorphisms, Lectures in Mathematics,. Birkh\"{a}user, 2001.

\bibitem {sewa92}K. Seip , R. Wallst\'{e}n, Density theorems for sampling and
interpolation in the Bargmann--Fock space. II, J. Reine Angew. Math. 429
(1992) 107--113.

\bibitem {struck}J. Struckmeier, Hamiltonian dynamics on the symplectic
extended phase space for autonomous and non-autonomous systems, J. Phys.A:
Math.Gen. 38 (2005) 1257.

\bibitem {Wang}D. Wang, Some aspects of Hamiltonian systems and symplectic
algorithms, Physica D. 73 (1994) 1--16.

\bibitem {wood}P.M. Woodward, Probability and Information Theory with
Applications to Radar, Pergamon Press, London, 1953.

\bibitem {xue}Xue-Shen Liu, Yue-Ying Qi, Jian-Feng He, Pei-Zhu Ding, Recent
Progress in Symplectic Algorithms for Use in Quantum Systems, Commun. Comput.
Phys. 2(1), (2007) 1--53

\bibitem {channel}P.J. Channel, C. Scovel, Symplectic integration of
Hamiltonian systems, Nonlinearity 3 (1990) 231--259.
\end{thebibliography}
\end{document}